\crefname{hypothesis}{Hypothesis}{Hypotheses}
\title{Lagrangian cuts generated by batch to efficiently solve two-stage stochastic mixed-integer program \thanks{Submitted to the editors DATE.
\funding{This work was funded by the National Nature Science Foundation of China under Grant No. 12320101001 and 12071428.}}}
\author{Xiaoyu Luo\thanks{School of Mathematical Sciences, Zhejiang University
  (\email{12135040@zju.edu.cn})}
\and Chuanhou Gao\thanks{School of Mathematical Sciences, Zhejiang University
  ( \email{gaochou@zju.edu.cn}).}
}
\begin{document}

\maketitle

\begin{abstract}
We propose to generate Lagrangian cut for two-stage stochastic integer program by batch, in contrast to the existing methods which solve each Lagrangian subproblem at every iteration. We establish two convergence properties of the proposed algorithm. Then we demonstrate that the improvement in the lower bound achieved by incorporating the Lagrangian cut adheres to the `triangle inequality', thereby showcasing the superiority of our proposed method over existing approaches. Moreover, we suggest acquiring Lagrangian cuts for unresolved scenarios by averaging the coefficients of the acquired Lagrangian cuts, ensuring the quality of this cut with a certain probability. Computational study demonstrates that our proposed algorithm can significantly improve the lower bound of the linear relaxation of the Bender master problem more quickly with much fewer Lagrangian cuts.
\end{abstract}

\begin{keywords}
two-stage stochastic mixed-integer program, Lagrangian cut, batch
\end{keywords}

\begin{MSCcodes}
90C06, 90C11, 90C15
\end{MSCcodes}

\section{Introduction}
Two-stage stochastic mixed-integer program (tsSMIP) has attracted an increasing interest in recent decades due to its extensive applications in various problems, including the facility location problem \cite{muffak2023benders, duran2023efficient}, the network flow problem \cite{rahmaniani2018accelerating} and the logistics problem \cite{alfandari2022tailored}. Mathematically, tsSMIP, in the form of extensive formulation \cite{birge2011introduction}, can be modelled by 
\begin{subequations}\label{equa:SIP_ori}
\begin{align}
	     \min_{x,y^s}~&c^{\top} x+\sum_{s \in S} p_sd_s^{\top} y^s,  \\ 
		\text {s.t.}~~& A x=b,  \\
		&  T^s x+W^s y^s \geq h^s, ~\forall s \in S, \\
		& x \in \mathbb{R}_{+}^{n_1-p_1} \times \mathbb{Z}_{+}^{p_1},~ 
        y^s \in \mathbb{R}_{+}^{n_2}, ~\forall s \in S, 
	\end{align}
 \end{subequations}
where $c \in \mathbb{R}^{n_1}$ is the first-stage cost vector, $x$ represents the first-stage mixed-integer variable that should be determined before the stochastic scenarios reveal, $S = \left\{1,..., m\right\}$ is a finite scenario set with every scenario $s$ to occur randomly, $p_s$ the probability that scenario $s$ occurs, $d_s\in\mathbb{R}^{n_2}$ the second-stage cost vector, $y^s$ the second-stage decision variable for scenario $s$, $A \in \mathbb{R}^{m_1\times n_1}$ the first-stage constraint matrix, $b \in \mathbb{R}^{m_1}$, and $T^s \in \mathbb{R}^{m_2 \times n_1}$, $W^s \in \mathbb{R}^{m_2 \times n_2}$, $h^s \in \mathbb{R}^{m_2}$ are scenario-specific for each $s \in S$. It is usually to assume that the program is complete recourse to ensure the existence of solution $y$ for any given $x$. Clearly, when the number of scenarios is large, the magnitude of the problem will be prohibitively large, which makes it a great challenge to solve \cref{equa:SIP_ori}. To better solve it, the formulation of \cref{equa:SIP_ori} is usually rewritten as an alternative one
\begin{align}
\label{lagformulation}
    \min_{x, \theta_s}\left\{c^\top x + \sum_{s \in S}p_s\theta_s: (x, \theta_s) \in E^s, s\in S\right\},
\end{align}
where 
\begin{subequations}
\begin{align}
       &E^s := \left\{(x, \theta_s) \in X \times \mathbb{R}:  (x, y^s) \in K^s, \theta_s \geq d_s^\top y^s\right\}, \\
       &X:= \left\{x \in \mathbb{R}_{+}^{n_1-p_1} \times \mathbb{Z}_{+}^{p_1}: Ax = b\right\}, \\
       &K_s := \left\{x \in X, y: Ax \geq b, T^sx + W^sy^s \geq h^s , y^s \in \mathbb{R}_{+}^{n_2} \right\}. 
\end{align}
\end{subequations}
This reformulation allows to solve tsSMIP in separate steps, which suggests that for a fixed first-stage solution $x$, the minimum of the second-stage cost function at scenario $s$ can be computed by
\begin{subequations}\label{sub}
    \begin{align}
        f_s(x) := \min_{y_s}~&d_s^{\top}y^s,\\
        \text {s.t.}~&W^sy^s \geq h^s - T^sx,\\
        &y^s \in \mathbb{R}_{+}^{n_2}.
    \end{align}
\end{subequations}
Bender decomposition \cite{bnnobrs1962partitioning} precisely captures this characteristic and accordingly induces the so-called Bender formulation by approximating the lower bound of \cref{lagformulation} using linear inequalities, including the `Bender master problem'
\begin{subequations}\label{master}
    \begin{align}
        \min_{x, \theta_s}~&c^{\top}x + \sum_{s \in S}p_s\theta_s\\
        \text{s.t.}~& x \in X,\\
        &\theta_s \geq \lambda_s^{\top}(h^s - T^sx),~ s \in S, ~\lambda_s \in \text{Vert}(\Lambda_s) \label{optimality cut}
    \end{align}
\end{subequations}
and the `Bender subproblem' \cref{sub}. In \cref{master}, $\Lambda_s:= \left\{\lambda_s \in \mathbb{R}^{m_2}: \lambda_s^{\top}W^s\leq d_s^{\top}\right\}$ is the polyhedron associated with the dual of \cref{sub} and Vert$(\Lambda_s)$ represents the set of all the vertices of $\Lambda_s$. The constraint in \cref{optimality cut} is called the `Bender optimality cut'. Since the cardinality of constraint \cref{optimality cut} can be exponentially large, we usually use partial optimality cuts in \cref{optimality cut} to initialize the Bender decomposition and denotes such relaxation of \cref{master} by the relaxed master problem. The solution procedure consists of iteratively solving the master problem \cref{master} to get $x$ and the subproblem \cref{sub} to generate the Bender optimality until the optimal solution is reached.  

Despite being an alternative way to solve tsSMIP, the Bender decomposition suffers from several drawbacks, such as the slow convergence, the oscillation of the current relaxed optimal solution and the weak strength of the Bender cut. Thereamong, the weak strength is since the linear relaxation of \cref{master} can be viewed as the projection of the linear relaxation of \cref{equa:SIP_ori} onto the space of $(x, \theta_s)_{s \in S}$ through the solution of \cref{sub} \cite{bodur2017strengthened}, and the Bender optimality cut does not utilize the information from the integrality constraint in $X$. As to speak, the Bender cut is in fact rather weak in improving the lower bound of the Bender master problem \cref{master}. To enhance the quality of the generated Bender cut, the concept of `Pareto optimal' \cite{magnanti1981accelerating} was developed, which actually does not lift the Bender cut but only selects out the non-dominated one to accelerate the algorithmic convergence in case of multiple optimal solutions of \cref{sub}. The `cut-and-project' framework \cite{caroe1999decomposition, ntaimo2008computations, beier2015stage, bodur2017strengthened, zhang2014finitely} is another strategy built to lift the Bender cut. This class of methods add the valid inequalities derived from the integrality constraints to the second-stage subproblems \cref{sub}, and more tightened linear programs are created to generate stronger Benders cuts. Within this framework, the Gomory mixed-integer (GMI) cuts \cite{gade2014decomposition, zhang2014finitely} were conceptualized and exhibited some superiority. Further, Bodur et al. \cite{bodur2017strengthened} presented a theoretical outcome supporting the superiority of the `cut-and-project' framework, leveraging a heuristic method introduced by \cite{dash2010heuristic} to generate GMI cuts for \cref{sub} based on a specified first-stage solution. Following this research trajectory, Rahmanian et al. \cite{rahmaniani2020benders} introduced Benders dual decomposition (BDD), wherein Lagrangian cuts, derived by solving single-scenario MIPs, are generated and incorporated into the Benders formulation to enhance the relaxation. This decomposition can be regarded as an enhanced version of Bender decomposition, producing much more potent cuts but at the expense of solving a Mixed-Integer Linear Program (MILP). The acceleration to generate Lagrangian cuts was made by Chen and Luedtke \cite{chen2022generating}, who put forth techniques that involve in addressing the cut generation problem within a restricted subspace and employing a MIP approximation to discern a promising restricted subspace. 

Notwithstanding the above mentioned efforts, it is still quite time-consuming to generate the Lagrangian cut. In line with the work \cite{chen2022generating}, this paper also aims to improve the efficiency of generating Lagrangian cuts for solving tsSMIP with the strategy of `cut generation in batches'. Note that this strategy has been proved valid in generating Benders cuts \cite{blanchot2023benders} for solving two-stage stochastic linear programs through numerical experiments. The convergence of Bender decomposition is thus accelerated significantly. Balas et. al. \cite{balas1996mixed} systematically discussed this method used for general mixed-integer programs, but still on the level of numerical experiment validation. Based on these facts, we try to generate Lagrangian cuts in batches for tsSMIP, and thereby to enhance the efficiency of improving the lower bound of the Bender master problem of \cref{master}. Compared with the work in \cite{blanchot2023benders}, our algorithm tackles the situation of Lagrangian relaxation, and thus undergoes a more intricate convergence property analysis. In addition, we highlight a robust theoretical result that emphasizes the superiority of our proposed method. The main contributions of the current work may be summarized as
\begin{itemize}
    \item apply the `batch' strategy to generating Lagrangian cuts for accelerating solving tsSMIP;
    \item provide a theoretical support to say the effectiveness of the proposed Lagrangian cut generation algorithm, given by two convergence diagrams and a triangle inequality for bound improvement;
    \item utilize the information acquired from previously solved Lagrangian subproblems to generate averaged Lagrangian cuts;
    \item conduct extensive experiments on three classes of problems to display the efficiency of our algorithm in two types of separation methods.
\end{itemize}

The rest of the paper is organized as follows: \Cref{sec:pre} gives a brief introduction on Dual decomposition and the Lagrangian cut. This is followed by the algorithm development in \Cref{sec:batch} on generating Lagrangian cuts by batch, and some theoretic analyses are made towards proving its advantage. Further, \Cref{sec:average} contributes to generating the averaged Lagrangian cut and giving a probabilistic guarantee for its quality. In \Cref{sec:experiment} the efficacy of our algorithm is illustrated through comprehensive experiments. Finally, \Cref{conclusion} concludes the paper and also presents some points of future research.

\section{Preliminaries}\label{sec:pre}
In this section, the preliminaries about Dual decomposition and Lagrangian cut are given. 
\subsection{Dual decomposition}
Dual decomposition \cite{caroe1999dual} reformulates the problem \cref{equa:SIP_ori} by incorporating copies of the first-stage variables, and then creates
\begin{subequations}\label{DualD}
\begin{align}
    \min_{x,x^s,y^s}&\sum_{s\in S}p_s(c^\top x^s + d_s^\top y^s)\\
    \text{s.t.}~&Ax^s \geq b, s \in S,\\
    &T^sx^s + W^sy^s \geq  h^s, s\in S,\\
    &x^s \in X, y^s \in \mathbb{R}_{+}^{n_2}, s\in S,\\
    &x^s = x, s \in S.\label{nonanti}
\end{align}
\end{subequations}
By relaxing the constraint \cref{nonanti} with Lagrangian multipliers $\lambda_s \in \mathbb{R}^{n_1}$ for each $s \in S$, the Lagrangian relaxation problem of \cref{DualD} can be stated as
\begin{subequations}
\begin{align}
    z(\lambda) = \min_{x, x^s, y^s}~&\sum_{s \in S}p_s(c^\top x^s+ d_s^\top y^s) + \sum_{s \in S}p_s\lambda_s^\top(x^s - x),\\
    \text{s.t.}~~ &(x^s, y^s) \in K^s,~ s\in S.
\end{align}
\end{subequations}
The corresponding Lagrangian dual problem can be thus written to be
\begin{align}
    z = \max_{\lambda}\left\{z(\lambda): \sum_{s \in S}p_s \lambda_s = 0\right\},\label{lagrangian dual}
\end{align}
which induces the well-known equality
    \begin{align}
    \label{dualbound}
        z = \min_{x, y^s}\left\{c^\top x + \sum_{s \in S}p_sd_s^\top y^s: (x, y^s) \in conv(K^s), s \in S\right\}.
    \end{align}
The above equality depicts the tightness of the Lagrangian relaxation, by which the lower bound induced exhibits robust superiority through experiments \cite{schutz2009supply, solak2010optimization}.  

\subsection{Lagrangian cut}
Typically, it is difficult to address \cref{lagrangian dual}  since the inner problem encompasses multiple mixed-integer programs. To sidestep the direct solving \cref{lagrangian dual}, Bender dual decomposition \cite{rahmaniani2020benders} was proposed, which leverages \cref{lagformulation} and formulates a mixed-integer subproblem for each scenario to generate a singular Bender-type cut, termed by Lagrangian cut in the context. The definition is: $\forall (\pi,\pi_0) \in \mathbb{R}^n \times \mathbb{R}_+$, denote
\begin{align}
	\bar{Q}_s\left(\pi, \pi_0\right)&
    :=\min _{x, y}\left\{\pi^{\top} x+\pi_0d_s^{\top} y:(x, y) \in K^s\right\},\label{lagsub}
 \end{align}
 then
 \begin{align}\label{lagCut}
\pi^{\top} x+\pi_0 \theta_s \geq \bar{Q}_s\left(\pi, \pi_0\right)
  \end{align}
is called Lagrangian cut. We refer to it as $(\pi, \pi_0)$ in the following for convenience. 

Lagrangian cut is essentially a valid inequality for \cref{master}, which induces the separation problem to be
 \begin{align}
    h_{s}(\hat{x}) = \max_{(\pi,\pi_{0})\in \Pi_{s}}\left\{\Bar{Q}_{s}(\pi,\pi_0) - \pi^\top\hat{x} - \pi_0\hat{\theta}_{s}\right\}.\label{equa9}
\end{align}
Here, $\Pi_{s}$ can be any neighborhood of $\mathbb{R}^{n}\times \mathbb{R}^{+}$, and we call \cref{equa9} `Lagrangian subproblem'. Assume $(\hat{\pi}, \hat{\pi}_0)$ to be the optimal solution of \cref{equa9}, then 
\begin{align}
    \hat{\pi}^\top x + \hat{\pi}_0\theta_s \geq \Bar{Q}_s(\hat{\pi}, \hat{\pi}_0)
\end{align}
is a Lagrangian cut. The lower bound resulting from the inclusion of all the Lagrangian cuts can be expressed as \cite{chen2022generating}
\begin{align}
\label{lgcutfea}
    Z_{LD} := \min_{x,\theta}\left\{ c^{\top}x + \sum_{s\in S}p_s\theta_s: \pi^\top x + \pi_0\theta_s \geq \Bar{Q}_s(\pi, \pi_0), (\pi,\pi_0) \in \Pi_s, s\in S\right\}.
\end{align}
Chen and Luedtke \cite{chen2022generating} further claimed that \textit{the feasible region defined by all the Lagrangian cuts is equivalent to that of \cref{dualbound}, which renders $z = Z_{LD}$.} This claim indicates the role of Lagrangian cut in solving tsSMIP, where the enumeration process may be bypassed. It also provides a possibility of using Lagrangian cuts to approximate $conv(E^s)$. To accelerate this approximation convergence, Chen and Luedtke \cite{chen2022generating} designed the restricted separation algorithms, given in \Cref{appendix}. Like classic Bender decomposition, the true objective function of \cref{lgcutfea} is approximated by the cutting plane model
\begin{align}
\label{Lagrangian master}
    c^{\top}x + \sum_{s\in S}p_s\hat{Q}^k_s(x) = c^{\top}x + \sum_{s\in S}p_s\min_{x, \theta_s}\left\{\theta_s: \pi^{\top}x + \pi_0\theta_s \geq \gamma, (\pi, \pi_0, \gamma) \in \Phi^k_s   \right\},
\end{align}
where $\Phi^k_s$ represents the Bender cuts and Lagrangian cuts that have been added to the master problem up to iteration $k$.

\section{Lagrangian cut generation algorithm and theoretical support}
\label{sec:batch}
In this section, we develop the algorithm to generate Lagrangian cut by batch, and then give some theoretic analysis to ensure efficacy. 
\subsection{Algorithm design}
As said in Algorithm A.1, when a Lagrangian cut is generated for each scenario $s \in S$, it needs to solve a bi-level program \cref{equa9}; and when a round of Lagrangian cuts are generated (i.e., all scenarios are traversed), the Lagrangian master problem \cref{Lagrangian master} is updated and further solved \cite{chen2022generating, rahmaniani2020benders}. The whole process still keeps time-consuming. To further improve the efficiency of solving, we borrow the `batch' strategy to generate Lagrangian cut. Specifically, at each iteration, we solve the Lagrangian subproblem \cref{equa9} by batch, and then go back to resolve the master problem as soon as some stopping condition is attained. This can be done by checking the corresponding objective value of the Lagrangian subproblem to judge if the total violation exceeds a preset threshold. Based on the work in \cite{chen2022generating}, the cut with too small $\pi_0$ has little impact on the lower bound of the master problem \cref{master}, so we do not consider the cut with coefficient $(\pi, \pi_0)$ in the whole space but only those with coefficient like $(\pi, 1)$, where $\pi$ belongs to a compact set $\Pi_s$ of $\mathbb{R}^n$. Therefore, the lower bound achieved by incorporating all of these Lagrangian cuts can be denoted as
\begin{subequations}
\label{complete}
\begin{align}
    \min_{x, \theta_s}~&c^{\top}x + \sum_{s \in S}p_sQ(x, s),\label{p1}\\
    \text{s.t.}~& x \in \Bar{X}:= \left\{x \in \mathbb{R}_{+}^{n_1}: Ax = b\right\}, \label{p1_2}
\end{align}
\end{subequations}
where 
\begin{align}
    Q(x,s) := \min~&\theta_s \nonumber \\
    \text{s.t.}~&\pi^\top x + \theta_s \geq \Bar{Q}_s(\pi, 1), ~\pi \in \Pi_s. \nonumber 
\end{align}

In the following, we develop the algorithm to generate Lagrangian cuts to approximate \cref{complete}, described in \Cref{a1}. In the algorithm, $\tau = \lceil m/\kappa \rceil$ represents the number of the batches; the scenario set $S$ is divided into $S = \cup_{i = 1}^\tau P_i$, where $P_i$ is a batch consisting of $\kappa$ scenarios and $P_i \cap P_j = \emptyset $ $\forall i, j \in [1,...,\tau ]$; $(RMP)_k$ is called the relaxed master program, representing the relaxation of \cref{complete} with partial Lagrangian cuts at iteration $k$. As soon as $(RMP)_k$ is solved to yield a relaxed optimal solution $(\hat{x}^k, \hat{\theta}_s^k)_{s \in S}$, we need to arrange the order of the batches to solve subproblems in turn. Further, we define the concept of `$\epsilon$-optimal solution' to associate with the `stopping criterion' used in \Cref{a1}.

\begin{algorithm}
\caption{Generating Lagrangian cut by batch}
\begin{algorithmic}[1]\label{a1}
\STATE Parameters: $\epsilon \geq 0$, $\kappa$ the batch size, $\tau$ the number of the batches.
\STATE $t \leftarrow 1$, $k \leftarrow 0$, stopping criterion $\leftarrow$ False
\STATE Obtain the scenario batch $S = \left\{P_1,..., P_{\tau}\right\}$ according to the given batch size.
\WHILE{$t < \tau + 1$}
\STATE $k \leftarrow k+1$
\STATE Solve $(RMP)_k$ and obtain the current relaxed optimal solution ($\hat{x}_k, \hat{\theta}^k_s)_{s \in S}$
\STATE $t \leftarrow 1$, stopping criterion $\leftarrow$ False
\STATE Choose a permutation $\sigma$ to order these batches
\WHILE{stopping criterion = False and $t < \tau +1$}
\STATE $t=t+1$
\FOR{$ s \in P_{\sigma(t)}$}
\STATE Solve the Lagrangian subproblem $h_s(\hat{x}_k)$, obtain the Lagrangian cut $(\pi_s,1)$ and add the cut to $(RMP)_k$.
\ENDFOR
\IF{$\sum_{s \in \cup_{i = 1}^{t}P_{\sigma(i)}}p_s(\Bar{Q}_s(\pi_s, 1) - \pi_s^{\top} \hat{x}_k - \hat{\theta_s^k}) \leq \epsilon$}
\STATE Continue
\ELSE
\STATE stopping criterion $\leftarrow$ True
\ENDIF
\ENDWHILE
\STATE $(RMP)_{k+1} \leftarrow (RMP)_{k}$
\ENDWHILE 
\STATE Return the $\epsilon$-optimal solution $\hat{x}_k$
\end{algorithmic}
\end{algorithm}

\begin{definition}[$\epsilon$-optimal solution]
Denote the lower bound of $(RMP)_k$ by $l_b^k = c^\top\hat{x}^k+ \sum_{s\in \mathcal{S}}p_s\hat{\theta}_s^k$ and the current objective value of \cref{complete} at the first-stage solution $x$ by $u_b(x) = c^\top x + \sum_{s \in \mathcal{S}}{p_s Q(x,s)}$. If the Lagrangian cut is not separated exactly, $u_b(x)=c^\top x + \sum_{s \in \mathcal{S}}p_s(\Bar{Q}_s(\pi_s, 1) - \pi_s^{\top} x)$, where $(\pi_s, 1)_{s \in \mathcal{S}}$ are the generated Lagrangian cuts. Then for any optimality gap $\epsilon \geq 0$, the first-stage solution $\hat{x}_k$ at iteration $k$ is not $\epsilon$-optimal if $u_b(\hat{x}_k) - l_b^k > \epsilon$; otherwise, it is $\epsilon$-optimal.\label{optimal}
\end{definition}



Based on \Cref{optimal}, we define the stopping criterion below.
\begin{definition}[stopping criterion]
As given in Line 14 in \Cref{a1}, the condition of $\sum_{s \in \cup_{i = 1}^{t}P_i}p_s(\Bar{Q}_s(\pi_s, 1) - \pi_s^{\top} \hat{x}_k - \hat{\theta_s^k}) > \epsilon$ is said to be a stopping criterion for $\hat{x}_k$ at some batch $t$ $(t\leq \tau)$ during the $k$th iteration process.  \label{stopping}
\end{definition}

Utilizing the stopping criterion, $\Cref{a1}$ can stop solving the Lagrangian subproblems of scenarios beyond batch $t$ ($t<\tau$), and jump out of the loop to resolve $(RMP)_k$, which will avoid to solve all the Lagrangian subproblems about every scenario batch before updating $(RMP)_k$. As a result, the time may be saved greatly using the current regime compared to that used to solve all the Lagrangian subproblems \cite{chen2022generating}. The following proposition reports a necessary and sufficient condition to suggest an $\epsilon$-optimal solution.

\begin{proposition}
For $(RMP)_k$ at iteration $k$, the solution $\hat{x}_k$ is $\epsilon$-optimal if and only if no scenario batch can trigger the `stopping criterion'.
\end{proposition}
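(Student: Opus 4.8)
The plan is to collapse both sides of the biconditional onto a single scalar — the total Lagrangian-cut violation at $\hat x_k$ aggregated over all scenarios — and then to exploit a monotonicity property of its partial sums over the batch ordering $\sigma$.

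First I would record the algebraic identity linking the optimality gap to the violation. Expanding $u_b(\hat x_k)$ and $l_b^k$ from the $\epsilon$-optimal solution definition and cancelling the common term $c^\top\hat x_k$ gives
\[
u_b(\hat x_k) - l_b^k = \sum_{s\in S} p_s\bigl(\bar Q_s(\pi_s,1) - \pi_s^\top \hat x_k - \hat\theta_s^k\bigr),
\]
valid both in the exact case (using $Q(\hat x_k,s) = \bar Q_s(\pi_s,1)-\pi_s^\top\hat x_k$ at the optimal $\pi_s$) and, directly by the inexact-separation formula stated in the definition, in the inexact case. Hence $\hat x_k$ fails to be $\epsilon$-optimal precisely when this total exceeds $\epsilon$. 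Crucially, the right-hand side is exactly the cumulative violation appearing in the stopping criterion evaluated at the last batch $t=\tau$, since $\cup_{i=1}^{\tau} P_{\sigma(i)} = S$.

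Next I would set $V_t := \sum_{s\in\cup_{i=1}^{t}P_{\sigma(i)}} p_s\bigl(\bar Q_s(\pi_s,1)-\pi_s^\top\hat x_k - \hat\theta_s^k\bigr)$ and show that each per-scenario summand is nonnegative, which forces the chain $0 \le V_1 \le \cdots \le V_\tau$. This monotonicity is the crux. Nonnegativity holds because $\hat\theta_s^k$ comes from the relaxed master $(RMP)_k$, which carries only a subset of the cuts and therefore underestimates $Q(\hat x_k,s)=\max_{\pi\in\Pi_s}\{\bar Q_s(\pi,1)-\pi^\top\hat x_k\}$; in exact separation $\bar Q_s(\pi_s,1)-\pi_s^\top\hat x_k = Q(\hat x_k,s)\ge \hat\theta_s^k$, and in the inexact case the separation routine returns a cut that is at least weakly violated at $(\hat x_k,\hat\theta_s^k)$, giving the same sign. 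I expect verifying this nonnegativity — in particular ruling out that a restricted or inexact separation step could return a cut with strictly negative violation — to be the only delicate point; everything else is routine bookkeeping.

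Finally I would assemble the equivalence. By monotonicity, some batch $t\le\tau$ triggers the stopping criterion (i.e.\ $V_t>\epsilon$) if and only if the largest partial sum does, namely $V_\tau>\epsilon$. Combining with the identity of the first step, $V_\tau>\epsilon$ is equivalent to $u_b(\hat x_k)-l_b^k>\epsilon$, i.e.\ to $\hat x_k$ not being $\epsilon$-optimal. Passing to contrapositives yields the stated biconditional: $\hat x_k$ is $\epsilon$-optimal if and only if no scenario batch can trigger the stopping criterion.
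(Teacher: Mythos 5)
Your proof is correct and follows essentially the same route as the paper's: both reduce the optimality gap to the total weighted violation $\sum_{s\in S}p_s\bigl(\bar Q_s(\pi_s,1)-\pi_s^\top \hat x_k-\hat\theta_s^k\bigr)$ and compare it against the batch partial sums, concluding via the contrapositive. The only difference is presentational---you make explicit the per-scenario nonnegativity (hence the monotonicity $0\le V_1\le\cdots\le V_\tau$) that the paper invokes implicitly when it writes $\sum_{s\in S}\ge\sum_{s\in\cup_{i=1}^{t}P_i}$, which is a clarification of the same argument rather than a different one.
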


\begin{proof}
    ($\Rightarrow$) If $\hat{x}_k$ is $\epsilon$-optimal, $\epsilon \geq u_b(\hat{x}_k) - l_b^k = \sum_{s \in \mathcal{S}}{p_s Q(\hat{x}_k,s)} - \sum_{s\in \mathcal{S}}p_s\hat{\theta}_s^k = \sum_{s \in S}p_s(\Bar{Q}_s(\pi_s, 1) - \pi_s^{\top} \hat{x}_k - \hat{\theta_s^k}) \geq \sum_{s \in \cup_{i = 1}^{t}P_i}p_s(\Bar{Q}_s(\pi_s, 1) - \pi_s^{\top} \hat{x}_k - \hat{\theta_s^k})$ for any $t \in [1, \tau]$. Therefore, no scenario batch can trigger the `stopping criterion'.\

    ($\Leftarrow$) If no scenario batch can trigger the `stopping criterion', the proof is straightforward since each of the above procedures is reversed.
\end{proof}



The concept of permutation in Line 8 in \Cref{a1} is defined as follows.  
\begin{definition}[permutation]
A permutation $\sigma$ is a bijection mapping $\left\{1,..., \tau\right\}$ to itself. \label{permutation}
\end{definition}

We give a simple example to exhibit permutation: $\sigma = 
\left(
\begin{array}{llll}
1 &2 &3~...&\tau\\
\tau&1&2~...&\tau -1
\end{array}
\right)
$
is a permutation that maps 1 to $\tau$, 2 to 1, and so on. In \Cref{a1}, the permutation is used to represent the order of scenario batches to be solved. The permutation  $\left\{P_{\sigma(1)}, P_{\sigma(2)},..., P_{\sigma(\tau)}\right\}$ emerging in Line 11 means the order to be solved from left to right.

Finally, we provide a bird's eye-view of \Cref{a1}. The whole loop from Line 4 to Line 21 indicates a complete loop for the master problem $(RMP)_k$, a flow of which serves to update the mater problem and yield a new relaxed optimal solution. At the beginning of each loop, we retrieve a new first-stage solution and choose a new permutation to reorder scenario batches to be solved. The while loop from Lines 9 to 19 indicates a loop of generating Lagrangian cuts for $(RMP)_k$. In this loop, Lines 11 to 13 show the process of generating Lagrangian cuts in a chosen batch and Lines 14 to 18 are used for checking whether the generated Lagrangian cuts are violated greatly (referred to the preset threshold) by current relaxed optimal solution. If the threshold is attained, we update $(RMP)_k$ and enter into the next loop (going back to Line 4), otherwise this algorithm returns an $\epsilon$-optimal solution.

\subsection{Theoretical analysis}
In this subsection, we make some theoretical analysis on the convergence of the result suggested by \Cref{a1} to the optimal solution, discussed through two cases of $\epsilon>0$ and $\epsilon=0$. 

\begin{lemma}
\label{lip}
The function $\Bar{Q}_s(\pi, 1)$ defined in \cref{lagsub} is Lipschitz continuous with respect to variable $\pi$.
\end{lemma}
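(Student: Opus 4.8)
The plan is to show that $\bar Q_s(\pi,1)$, viewed as the value function of the parametric minimization in \cref{lagsub} with $\pi_0$ fixed at $1$, is Lipschitz in $\pi$ by exploiting that the parameter $\pi$ enters only linearly in the objective through the term $\pi^\top x$. Concretely, $\bar Q_s(\pi,1) = \min_{(x,y)\in K^s}\{\pi^\top x + d_s^\top y\}$, so for two parameter values $\pi,\pi'$ the optimizer $(x,y)$ attained at $\pi$ is still feasible at $\pi'$, and the objective shifts by exactly $(\pi'-\pi)^\top x$. This is the standard route for proving that a linear-programming value function is piecewise linear and globally Lipschitz in the cost coefficients, and I expect the entire argument to hinge on bounding the relevant $x$-components.

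First I would fix an arbitrary pair $\pi,\pi'\in\Pi_s$ and let $(x^\ast,y^\ast)$ be an optimal solution of $\bar Q_s(\pi,1)$. Since the feasible set $K^s$ does not depend on the parameter, $(x^\ast,y^\ast)$ is feasible for $\bar Q_s(\pi',1)$, giving
\begin{align}
\bar Q_s(\pi',1) \leq (\pi')^\top x^\ast + d_s^\top y^\ast = \bar Q_s(\pi,1) + (\pi'-\pi)^\top x^\ast.\nonumber
\end{align}
Applying Cauchy--Schwarz, $\bar Q_s(\pi',1) - \bar Q_s(\pi,1) \leq \|\pi'-\pi\|\,\|x^\ast\|$. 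Swapping the roles of $\pi$ and $\pi'$ yields the reverse inequality with the corresponding optimal $x$-component, so that
\begin{align}
\lvert \bar Q_s(\pi',1) - \bar Q_s(\pi,1)\rvert \leq L_s\,\|\pi'-\pi\|,\nonumber
\end{align}
where $L_s$ is a uniform bound on $\|x\|$ over the $x$-components of the optimal solutions encountered. The key remaining task is to justify that such a finite $L_s$ exists and can be chosen independently of $\pi$.

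The main obstacle, then, is the uniform boundedness of the optimizing $x$-components. I would argue this by restricting attention, as the paper does, to $\pi$ ranging over the compact set $\Pi_s\subset\mathbb{R}^n$, and invoking that for a linear program the optimum is attained at a vertex of $K^s$ (or of the relevant face), so it suffices to bound the $x$-coordinates of the finitely many vertices of the polyhedron $\mathrm{conv}(K^s)$ — a quantity independent of $\pi$. Under the complete-recourse assumption stated after \cref{equa:SIP_ori}, the inner minimum is finite for every $\pi\in\Pi_s$, and one may take $L_s := \max\{\|x\| : (x,y)\text{ a vertex of }\mathrm{conv}(K^s)\}$, or more carefully the maximum of $\|x\|$ over the optimal-solution set as $\pi$ varies over $\Pi_s$. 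A cleaner alternative, if the integer components of $x$ are bounded and the continuous components lie in a bounded box, is to read $L_s$ directly off those bounds. Either way, once uniform boundedness is secured the Lipschitz constant is explicit and the two-sided inequality above completes the proof.
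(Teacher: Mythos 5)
Your proof is correct, and it rests on the same key fact as the paper's, but the packaging is genuinely different. The paper argues structurally: the minimum defining $\bar Q_s(\pi,1)$ is attained at one of the finitely many extreme points of the convex hull of the feasible set, so $\bar Q_s(\cdot,1)$ is a pointwise minimum of finitely many affine functions of $\pi$, hence piecewise linear, hence Lipschitz. You instead run the direct perturbation estimate: the optimizer at $\pi$ remains feasible at $\pi'$, which together with Cauchy--Schwarz and symmetry gives $\lvert \bar Q_s(\pi',1)-\bar Q_s(\pi,1)\rvert \le L_s\,\lVert\pi'-\pi\rVert$, and you then secure the uniform bound $L_s$ from the very same vertex-finiteness fact (finiteness of the extreme points of the convex hull of a mixed-integer set) that the paper invokes. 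What your route buys is an explicit Lipschitz constant, $L_s = \max\{\lVert x\rVert : (x,y)\ \text{a vertex of}\ \mathrm{conv}(K^s)\}$, and a proof that needs only uniform boundedness of optimal $x$-components rather than piecewise linearity per se; what the paper's route buys is brevity and the stronger structural conclusion (piecewise linearity) in one line. Note that both arguments share the same implicit assumption, namely that the minimum is finite and attained for every $\pi\in\Pi_s$ (which requires $\mathrm{conv}(K^s)$ to be pointed and the objective to be bounded below on it); your appeal to complete recourse only rules out infeasibility, not unboundedness below, but the paper's bare assertion that the linear program ``is sure to have an optimal solution'' glosses over exactly the same point, so this is not a gap of your proof relative to the paper's.
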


\begin{proof}
From the definition of $\Bar{Q}_s(\pi, 1)$, we have 
$$\bar{Q}_s\left(\pi, 1\right)
    :=\min _{x, \theta^s}\left\{\pi^{\top} x + \theta_s:(x, \theta^s) \in E_s\right\}.$$
Since this linear programming is sure to have an optimal solution and the extreme point of $conv(E_s)$ is finite, $\Bar{Q}_s(\pi, 1)$ can be viewed as the lower bound of several linear functions. Therefore, $\Bar{Q}_s(\pi, 1)$ is a piecewise linear function, which is Lipschitz continuous.
\end{proof}

\begin{theorem}
\label{convergence} For \Cref{a1}, (1) in the case of $\epsilon > 0$, the output of the algorithm converges to an $\epsilon$-optimal solution of the problem \cref{complete} in a finite number of iteration steps; 
(2) in the case of $\epsilon = 0$, the accumulation point of the sequence $\left\{(\hat{x}_k, \hat{\theta}_s^k)_{s \in S}\right\}_{k = 1}^{\infty}$ generated by $(RMP)_k$ is optimal to the problem \cref{complete}.
\end{theorem}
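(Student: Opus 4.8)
The plan is to treat \Cref{a1} as a Kelley-type cutting-plane scheme for the polyhedral convex program \cref{complete} and to exploit two structural facts throughout. First, by \Cref{lip} each $\bar{Q}_s(\cdot,1)$ is piecewise linear, so $Q(\cdot,s)=\max_{\pi\in\Pi_s}\{\bar{Q}_s(\pi,1)-\pi^\top x\}$ is a finite-valued convex function that is Lipschitz on the feasible set with a constant controlled by the bounded multipliers $\pi\in\Pi_s$. Second, cuts are never discarded, so $(RMP)_{k+1}$ contains every inequality of $(RMP)_k$; hence the lower bounds $l_b^k$ are nondecreasing and, since each $(RMP)_k$ is a relaxation of \cref{complete}, they satisfy $l_b^k\le Z^\ast$, where $Z^\ast$ denotes the optimal value of \cref{complete}. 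Together with the standing assumption that the first-stage iterates lie in a compact set (so that $\{(\hat{x}_k,\hat{\theta}^k_s)_{s\in S}\}$ is bounded and accumulation points exist), these give the compactness and monotonicity needed below. I would also record the key persistence inequality: if scenario $s$ receives a cut $(\pi_{s},1)$ at iteration $k$, then for every later iteration $l>k$ the constraint is present in $(RMP)_l$, so $\hat{\theta}^l_s\ge \bar{Q}_s(\pi_{s,k},1)-\pi_{s,k}^\top\hat{x}_l$.

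For part (1) ($\epsilon>0$) I would argue by contradiction. Suppose the algorithm never terminates; then at every outer iteration the stopping criterion of \Cref{stopping} fires, so the weighted violation exceeds $\epsilon$. Passing to a convergent subsequence $\hat{x}_{k_j}\to\hat{x}^\ast$, I would compare two consecutive subsequence iterates: using the persistence inequality for a scenario $s$ cut at $k_j$ together with exact separation (which gives $\bar{Q}_s(\pi_{s,k_j},1)-\pi_{s,k_j}^\top\hat{x}_{k_j}=Q(\hat{x}_{k_j},s)$), the per-scenario violation at $k_{j+1}$ is bounded by $Q(\hat{x}_{k_{j+1}},s)-Q(\hat{x}_{k_j},s)+\pi_{s,k_j}^\top(\hat{x}_{k_{j+1}}-\hat{x}_{k_j})$. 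Both terms vanish as $j\to\infty$ by the Lipschitz continuity of $Q(\cdot,s)$ (from \Cref{lip}) and the boundedness of $\pi_{s,k_j}$, so every per-scenario violation tends to zero; hence the weighted sum appearing in \Cref{stopping} drops below $\epsilon$ for large $j$, contradicting that the criterion fires. Therefore the algorithm stops after finitely many iterations, at which point no batch triggers the stopping criterion and the necessary-and-sufficient condition proved above certifies that the returned $\hat{x}_k$ is $\epsilon$-optimal in the sense of \Cref{optimal}.

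For part (2) ($\epsilon=0$) I would keep the infinite sequence and show any accumulation point is optimal. Fix a subsequence $(\hat{x}_{k_j},\hat{\theta}^{k_j})\to(\hat{x}^\ast,\hat{\theta}^\ast)$. For a scenario $s$ that is cut at $k_j$, the persistence inequality evaluated at a later index $k_l$ gives $\hat{\theta}^{k_l}_s\ge Q(\hat{x}_{k_j},s)-\pi_{s,k_j}^\top(\hat{x}_{k_l}-\hat{x}_{k_j})$; letting $l\to\infty$ and then $j\to\infty$, and invoking continuity of $Q(\cdot,s)$, yields $\hat{\theta}^\ast_s\ge Q(\hat{x}^\ast,s)$. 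Thus $(\hat{x}^\ast,\hat{\theta}^\ast)$ is feasible for \cref{complete}, so its objective value is at least $Z^\ast$. On the other hand, by continuity the objective equals $\lim_j l_b^{k_j}\le Z^\ast$. The two bounds force equality, so the accumulation point attains $Z^\ast$ and is optimal for \cref{complete}.

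The main obstacle is the batch machinery with its changing permutation $\sigma$: a given scenario need not be cut at every iteration, so I must guarantee that each scenario carrying positive violation is separated infinitely often along the chosen subsequence, and that the partial, batch-indexed sums in \Cref{stopping} can be controlled uniformly. I would handle this by restricting attention to the first processed batch (always nonempty when the criterion fires) and, if necessary, refining the subsequence per scenario via a diagonal argument so that the persistence inequality is available for every scenario entering the violation sum. A secondary point to make rigorous is the exact-separation identity $\bar{Q}_s(\pi_{s,k},1)-\pi_{s,k}^\top\hat{x}_{k}=Q(\hat{x}_{k},s)$ used above; if only inexact (restricted) separation is available, I would instead track the separation accuracy and absorb it into the $\epsilon>0$ slack in part (1), while part (2) would additionally require the separation gap to vanish in the limit.
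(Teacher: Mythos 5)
Your part (1) is sound and is essentially the mirror image of the paper's argument. The paper also argues by contradiction from non-termination, using the same two ingredients you isolate --- persistence of previously added cuts in $(RMP)_k$ and the lower bound on violation coming from the stopping criterion --- but it lands the contradiction in the multiplier space: subtracting the two inequalities and invoking \Cref{lip} together with boundedness of the iterates forces the stacked cut coefficients $(\pi^{k}_s)_{s\in S}$ at distinct iterations to stay uniformly $\gamma$-separated, which is impossible in the compact set $\prod_{s\in S}\Pi_s$. You instead land it in the $x$-space: along a convergent subsequence of iterates, persistence plus the exact-separation identity forces the per-scenario violations to vanish, contradicting the criterion firing. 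Both routes use exactly the same hypotheses (compact $\Pi_s$, bounded iterates, Lipschitz continuity), so yours is a legitimate variant rather than a different theorem. For the batch bookkeeping in part (1), the right repair is the pigeonhole you gesture at: since $S$ is finite, some single scenario is processed with weighted violation at least $\epsilon/\lvert S\rvert$ infinitely often, and it is genuinely cut at those iterations; the paper instead assigns to each unprocessed scenario a previously added cut that is active at the current point, which serves the same purpose. Your part (2) is organized differently from the paper's: you prove the accumulation point is feasible for \cref{complete} and combine this with the fact that the relaxation bound $l_b^k$ never exceeds the optimal value of \cref{complete}, whereas the paper assumes non-optimality, extracts a uniform violation $\sigma/2$ near the accumulation point, and reduces to case (1). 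Your version is cleaner where it applies.

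However, part (2) has a genuine gap that your proposed diagonal argument cannot close, and you only half-recognize it in your final paragraph. Feasibility of the accumulation point requires $\hat{\theta}^\ast_s \ge Q(\hat{x}^\ast,s)$ for \emph{every} $s\in S$, and your only tool for producing such an inequality is persistence of a cut generated for $s$ at iterations whose first-stage solutions approach $\hat{x}^\ast$. But \Cref{a1} lets the permutation $\sigma$ be chosen arbitrarily at each iteration, and with $\epsilon=0$ the inner loop stops at the first batch whose cumulative violation is positive. A scenario can therefore be starved: if the batches ordered ahead of it always retain an arbitrarily small positive violation (which a Kelley-type scheme permits indefinitely in a non-terminating run), that scenario is never separated again, no refinement of the subsequence produces iterations at which it is cut, and nothing in the argument enforces $\theta_s \ge Q(x,s)$ at the limit. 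Closing this requires a fairness assumption on the batch order --- for instance the cyclic rule the paper actually implements, under which every batch is processed infinitely often. To be fair, the paper's own proof of case (2) silently has the same lacuna: its "similar inequality to \cref{2}" presumes that the scenarios realizing the violation $\sigma/2$ are among those actually processed. So your attempt is not weaker than the paper here, but as a standalone proof, part (2) needs the fairness hypothesis made explicit.
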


\begin{proof}
(case 1: $\epsilon > 0 $) It is obvious to see that if the algorithm terminates in a finite number of steps, the  solution returned by \Cref{a1} is an $\epsilon$-optimal solution. Assume that this algorithm can not terminate in finite steps. Then we can generate a sequence $\left\{(\hat{x}_k, \hat{\theta}_s^k)_{s \in S}\right\}_{k = 1}^{\infty}$ and a corresponding sequence of Lagrangian cuts $\left\{(\pi^k_{s},1)_{s \in S}\right\}_{k = 1}^{\infty}$, where $(\pi^k_{s},1)_{s \in S}$ cut off $(\hat{x}_k, \hat{\theta}_s^k)_{s \in S}$. Note that at each iteration, only part of the scenario set generates Lagrangian cuts, therefore we can design the $(\pi^k_{s},1)$ for the other part as a cut that has been added and is active at point $(\hat{x}_k, \hat{\theta}^k_s)$. For any $k_1 > k_2$, 
\begin{align}
\sum_{s \in S}p^s[\Bar{Q}_{s}(\pi^{k_2}_{s},1) - (\pi^{k_2}_{s})^{\top}\hat{x}^{k_{1}} -\hat{\theta}_s^{k_1}] \leq 0,\label{1}
\end{align}
because $(\hat{x}_{k_1}, \hat{\theta}_{s}^{k_1})_{s \in S}$ can not violate the Lagrangian cuts that have been generated.

On the other hand, because $(\hat{x}_{k_1}, \hat{\theta}_s^{k_1})_{s \in S}$ violates the Lagrangian cut defined by $(\pi^{k_1}_{s},1)_{s \in S}$ at least by $\epsilon$: 
\begin{equation}
\sum_{s \in S}p^s[\Bar{Q}_{s}(\pi^{k_1}_{s}, 1) - (\pi^{k_1}_{s})^{\top}\hat{x}_{k_{1}} -\hat{\theta}_s^{k_1}] \geq \epsilon.\label{2}
\end{equation}
Because the sequence $\left\{(\hat{x}_k,\hat{\theta}_s^k)_{s \in S}\right\}_{t = 1}^{\infty}$ is bounded and $\Bar{Q}_{s}(\pi, 1)$ is  Lipschitz continuous, there exists a positive number $\gamma$ such that $\lvert(\pi^{k_1}_{s}, 1) - (\pi^{k_2}_{s}, 1)\rvert \geq \gamma$ for all $s \in S$. Otherwise the gap between \cref{1} and \cref{2} can not exceed $\epsilon$. However, this contradicts with the fact that $(\Pi_s)_{s \in S}$ is a compact set.

(case 2: $\epsilon = 0$) In this case, we can not infer the inequality \cref{2} directly, because the violation at each iteration is not necessarily larger than some fixed positive number. We assume that the accumulation point $(\hat{x}, \hat{\theta}_s)_{s \in \mathcal{S}}$ is not optimal, then we have the following inequality: $\sum_{s \in \mathcal{S}}p^s\max \left\{\Bar{Q}_s(\pi_s, 1 ) - \pi_s^{\top} \hat{x} - \hat{\theta}_s\right\} = \sigma > 0$. The intuition is easy: when the points in the sequence is close enough to the point $(\hat{x}, \hat{\theta}_s)_{s \in \mathcal{S}}$, they can also be cut off with a violation stricly larger than zero. Indeed:
{\small
\begin{align}
&\sum_{s \in \mathcal{S}}(\max_{\pi}\left\{\Bar{Q}_s(\pi_s, 1) - \pi_s^{\top}\hat{x} - \hat{\theta}_s: \pi_s \in \Pi_s\right\} - \max_{\pi_s}\left\{\lvert\pi_s^{\top}(\hat{x}_k - \hat{x}) + \hat{\theta}^k_s - \hat{\theta}_s \rvert: \pi_s \in \Pi_s \right\})\nonumber\\
         &\leq\sum_{s \in \mathcal{S}}\max_{\pi_s}\left\{\Bar{Q}_s(\pi_s, 1) - \pi_s^{\top}\hat{x}_k - \hat{\theta}_s^k: \pi_s \in \Pi_s\right\}\nonumber
\end{align}
}
holds for every $k \in \mathcal{Z}$.
Because $\left\{(\hat{x}_k, \hat{\theta}_s^k)_{s \in S}\right\}_{t = 1}^{\infty}$ converges to  $(\hat{x}, \hat{\theta}_s)_{s \in \mathcal{S}}$, there exists an positive integer number $N$, such that
$\forall k \geq N$, 
$$
\sum_{s \in \mathcal{S}}\max_{\pi_s}\left\{\lvert\pi_s^{\top}(\hat{x}_k - \hat{x}) + \hat{\theta}^k_s - \hat{\theta}_s \rvert : \pi_s \in \Pi_s \right\} \leq \sigma/2.
$$
Then we can obtain a similar inequality to \cref{2}. The remaining proof is the same as case (1).
\end{proof}

\begin{corollary}
\label{conver col}
In the case of $\epsilon > 0$, if the Lagrangian cut is generated with a tolerance $\delta > 0$, then the output of \Cref{a1} converges to an $\frac{\epsilon}{1-\delta}$-optimal solution in a finite number of iteration steps.
\end{corollary}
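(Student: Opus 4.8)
The plan is to reduce the statement to the exact-separation case already settled in \Cref{convergence}, by first fixing what ``tolerance $\delta$'' means and then propagating it through both the termination argument and the final gap bound. I would read the $\delta$-tolerance (with $\delta\in(0,1)$) multiplicatively: when the Lagrangian subproblem $h_s(\hat{x}_k)$ is solved only approximately, the returned cut $(\pi_s,1)$ recovers at least a $(1-\delta)$ fraction of the true maximal violation, i.e.
\[
\Bar{Q}_s(\pi_s,1) - \pi_s^{\top}\hat{x}_k - \hat{\theta}_s^k \geq (1-\delta)\bigl(Q(\hat{x}_k,s) - \hat{\theta}_s^k\bigr), \quad s \in S.
\]
Writing $v_s^k := \Bar{Q}_s(\pi_s,1) - \pi_s^{\top}\hat{x}_k - \hat{\theta}_s^k$ for the \emph{measured} violation that drives the stopping criterion, and $v_s^{*,k} := Q(\hat{x}_k,s) - \hat{\theta}_s^k$ for the \emph{true} violation, this reads $v_s^k \geq (1-\delta)v_s^{*,k}$; both quantities are nonnegative because $\hat{\theta}_s^k$ minimizes over a subset of the cuts defining $Q(\cdot,s)$, so $\hat{\theta}_s^k \leq Q(\hat{x}_k,s)$.

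Next I would establish finite termination by mirroring case 1 of \Cref{convergence}. Suppose the algorithm never stops; then at every iteration $k$ some batch triggers the stopping criterion, so the freshly generated cuts satisfy $\sum_{s} p_s v_s^{k} > \epsilon$, while every cut added at an earlier iteration $k_2<k_1$ remains satisfied at $(\hat{x}_{k_1},\hat{\theta}_s^{k_1})$, giving $\sum_{s} p_s[\Bar{Q}_s(\pi_s^{k_2},1) - (\pi_s^{k_2})^{\top}\hat{x}_{k_1} - \hat{\theta}_s^{k_1}] \leq 0$. The gap of at least $\epsilon$ between these two expressions, combined with the boundedness of $\{(\hat{x}_k,\hat{\theta}_s^k)_{s\in S}\}$ and the Lipschitz continuity of $\Bar{Q}_s(\cdot,1)$ from \Cref{lip}, forces $\lvert\pi_s^{k_1} - \pi_s^{k_2}\rvert$ to stay bounded below by a fixed $\gamma>0$ for all $k_1>k_2$, contradicting the compactness of $\Pi_s$. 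The only new ingredient relative to \Cref{convergence} is that the separating violation is the \emph{measured} quantity $v_s^k$ rather than the exact one; since the stopping criterion is itself phrased in terms of $v_s^k$, the same $\epsilon$-lower bound is available and the argument carries over verbatim.

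Finally I would convert the terminal measured gap into a true gap. At termination no batch triggers the stopping criterion, so by the preceding proposition $\hat{x}_k$ is $\epsilon$-optimal in the measured sense, i.e. $\sum_{s\in S} p_s v_s^{k} \leq \epsilon$. Applying $v_s^{*,k} \leq v_s^{k}/(1-\delta)$ termwise and summing with the positive weights $p_s$ gives
\[
u_b(\hat{x}_k) - l_b^k = \sum_{s\in S} p_s v_s^{*,k} \leq \frac{1}{1-\delta}\sum_{s\in S} p_s v_s^{k} \leq \frac{\epsilon}{1-\delta},
\]
where $u_b$ now denotes the true objective of \cref{complete}; hence the returned solution is $\frac{\epsilon}{1-\delta}$-optimal. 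I expect the main obstacle to be conceptual rather than computational: pinning down the precise meaning of the $\delta$-tolerance so that it is (a) the quantity actually controlled by an approximate solver of \cref{equa9} and (b) of the multiplicative form $v_s^k \geq (1-\delta)v_s^{*,k}$, which is exactly what produces the factor $1/(1-\delta)$. Once that convention is fixed, both the termination proof and the gap estimate are short adaptations of \Cref{convergence}.
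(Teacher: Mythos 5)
Your proposal is correct and follows essentially the same route as the paper's own proof: you interpret the tolerance $\delta$ multiplicatively (the measured violation is at least $(1-\delta)$ times the true maximal violation), combine this termwise with the terminal stopping-criterion bound $\sum_{s}p_s v_s^k \leq \epsilon$ to obtain the $\frac{\epsilon}{1-\delta}$ gap, and handle finite termination by adapting case 1 of \Cref{convergence}. Your write-up is in fact somewhat more careful than the paper's, which states the two inequalities in a single display and defers termination to \Cref{convergence} in one sentence, whereas you make explicit both the measured-versus-true violation distinction and why the termination argument still applies when the stopping criterion is driven by the measured quantity.
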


\begin{proof}
If the algorithm terminates in a finite number of steps with a solution $(\hat{x}_k,\hat{\theta}_s^k)_{s \in S}$, then 
$$
\begin{aligned}
    &\sum_{s \in S}(1-\delta)p^s\max_{(\pi_s, 1)}\left\{\Bar{Q}_{s}(\pi_s, 1) - \pi_s^{\top}\hat{x}_k - \hat{\theta}_s^k, \pi_s \in \Pi_{s}\right\}\\
    \leq &\sum_{s \in S}p^s\left\{\Bar{Q}_{s}(\pi^k_s,1) - (\pi_s^k)^{\top}\hat{x}_k - \hat{\theta}_s^k, \pi_s^k \in \Pi_{s}\right\} \\
    \leq& \epsilon
\end{aligned}
$$
Therefore, $ \sum_{s \in S}p^s\max_{\pi_s}\left\{\Bar{Q}_{s}(\pi_s, 1) - \pi_s^{\top}\hat{x}_k - \hat{\theta}_s^k, \pi_s \in \Pi_{s}\right\} \leq \frac{\epsilon}{1-\delta} $. The proof of convergence in finite number of steps is similar to the one in \Cref{convergence}.
\end{proof}

\begin{remark}
 \Cref{convergence} articulates the convergence of \Cref{a1} about generating Lagrangian cuts by batch. \Cref{conver col} further depicts the extent that the algorithm output can tolerate  the cut generation gap. Clearly, even if the tolerance $\delta$ reaches $50\%$, the ultimate returned first-stage solution is still within a gap of $2\epsilon$.
 \end{remark}

Besides the convergence, \cref{a1} also exhibits a potential of efficiency due to a large possibility of only solving Lagrangian subproblems attributing to part scenarios during every iteration. This simultaneously means that fewer Lagrangian cuts need to be added to the corresponding master problem for the same lower bound improvement. We continue to elucidate this point from the theoretical level through beginning with a simple case.  

Consider a Bender-type master formulation of a tsSMIP with two scenarios, labeled by $sce_1$ and $sce_2$, respectively. 
\begin{subequations}\label{t1}
\begin{align}
    \min_{x, \theta}~&c^{\top}x + \theta_1 + \theta_2 \label{3},\\
    \text{s.t.}~&\theta_1 e \geq D_1 x + H_1, \label{4}\\
    &\theta_2 e \geq D_2 x + H_2, \label{5}\\
    &x \in X, \theta_1 \in \mathbb{R}, \theta_2 \in \mathbb{R},\label{3.8}   
\end{align}
\end{subequations}
where $e$ is a vector comprised entirely of ones, and (\ref{4}) (\ref{5}) are the already added Bender-like cuts for two scenarios, respectively. Then we try to add Lagrangian cuts generated by single scenario. For the needs of theoretical analysis, the candidate Lagrangian cuts generated by solving the Lagrangian subproblems \cref{equa9} are assumed as those that can improve the bound to the greatest extent, denoted individually by  
\begin{subequations}
\begin{align}
&\theta_1 + \pi_1^{\top} x \geq \Bar{Q}_1(\pi_1, 1)\label{c1},\\
&\theta_2 + \pi_2^{\top} x \geq \Bar{Q}_2(\pi_2, 1).\label{c2}
\end{align}
\end{subequations}
The coefficients $\pi_i (i=1,2)$ should satisfy
\begin{subequations}\label{pi-single}
\begin{align}
    \pi_i = \mathop{\arg \max}_{\pi} \min_{x, \theta}~&c^{\top}x + \theta_1 + \theta_2,\label{7} \\
    \text{s.t.}~&\theta_1 e \geq D_1 x + H_1, \\
    &\theta_2 e \geq D_2 x + H_2, \\
    &\theta_i + \pi^{\top} x \geq \Bar{Q}_i(\pi, 1),\\
    &x \in X, \theta_1 \in \mathbb{R}, \theta_2 \in \mathbb{R}, i = 1,2. \label{7.1}
\end{align}
\end{subequations}
The enhanced lower bound resulting from the incorporation of these two cuts into the master problem \cref{t1} is denoted as $d(sce_1, sce_2)$.

Next, we consider Lagrangian cuts generated by batch (one batch contains $sce_1$ and the other contains $sce_2$), which follow
\begin{subequations}\label{pi1-batch}
\begin{align}
    \pi_1 = \mathop{\arg \max}_{\pi} \min_{x, \theta}~&c^{\top}x + \theta_1 + \theta_2, \label{12}\\
    \text{s.t.}~&\theta_1 e \geq D_1 x + H_1, \\
    &\theta_2 e \geq D_2 x + H_2, \\
    &\theta_1 + \pi^{\top} x \geq \Bar{Q}_1(\pi, 1),\\
    &x \in X, \theta_1 \in \mathbb{R}, \theta_2 \in \mathbb{R}
\end{align}
\end{subequations}
and 
\begin{subequations}\label{pi2-batch}
\begin{align}
    \pi_2^* = \mathop{\arg \max}_{\pi} \min_{x, \theta}~&c^{\top}x + \theta_1 + \theta_2, \\
    \text{s.t.}~&\theta_1 e \geq D_1 x + H_1, \label{18}\\
    &\theta_2 e \geq D_2 x + H_2, \\
    &\theta_1 + \pi_1^{\top} x \geq \Bar{Q}_{1}(\pi_1, 1),\\
    &\theta_2 + \pi^{\top} x \geq \Bar{Q}_2(\pi, 1),\\
    &x \in X, \theta_1 \in \mathbb{R}, \theta_2 \in \mathbb{R}.\label{22}
\end{align}
\end{subequations}
Note that for $sce_1$, the coefficient $\pi_1$ is completely the same in two situations (\cref{pi-single} and \cref{pi1-batch}) while for $sce_2$ $\pi_2$ and $\pi_2^*$ are different. Their relation mainly relies on the order of Lagrangian subproblems to be solved. Although different order may result in different lower bound improvement, it will not affect the subsequent result. By denoting the lower bound improvement in the `batch' situation by $d(sce_1) + d(sce_2)$, we have
\begin{theorem}
\label{thm-triangle}
The lower bounds improved by two kinds of Lagrangian cuts given in \cref{pi-single} and \cref{pi1-batch} plus \cref{pi2-batch} satisfy the triangle inequality 
\begin{align}
d(sce_1) + d(sce_2) \geq d(sce_1, sce_2).\label{ine1}
\end{align}
\end{theorem}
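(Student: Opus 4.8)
The plan is to reduce the triangle inequality \cref{ine1} to a single comparison between two augmented master problems that share a common cut for $sce_1$. Write $LB_0$ for the optimal value of the base master \cref{t1}. Both quantities in \cref{ine1} are improvements measured from this same baseline: the single-scenario value is $d(sce_1,sce_2)=LB_{12}'-LB_0$, where $LB_{12}'$ is the optimal value of \cref{t1} after appending the two cuts \cref{c1} and \cref{c2} with the coefficients $\pi_1,\pi_2$ supplied by \cref{pi-single}; the batch value is $d(sce_1)+d(sce_2)=LB_{12}-LB_0$, where $LB_{12}$ is the optimal value after appending the $sce_1$ cut with coefficient $\pi_1$ and the $sce_2$ cut with coefficient $\pi_2^*$ supplied by \cref{pi2-batch}. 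Cancelling the common $LB_0$, the claim is equivalent to $LB_{12}\ge LB_{12}'$.

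First I would record that the two augmented problems differ only in the $sce_2$ cut. As already noted after \cref{pi2-batch}, the coefficient $\pi_1$ produced by \cref{pi-single} and by \cref{pi1-batch} coincide, so both $LB_{12}$ and $LB_{12}'$ are optimal values of \cref{t1} carrying the identical cut $\theta_1+\pi_1^\top x\ge \bar Q_1(\pi_1,1)$ together with one $sce_2$ cut of the form $\theta_2+\pi^\top x\ge \bar Q_2(\pi,1)$; the sole difference is whether $\pi=\pi_2$ or $\pi=\pi_2^*$.

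The crux is the optimality property encoded in the definition of $\pi_2^*$. By \cref{pi2-batch}, $\pi_2^*$ is precisely the coefficient $\pi$ that maximizes the inner minimum, i.e. the optimal value of \cref{t1} with the fixed $sce_1$ cut (coefficient $\pi_1$) present and a variable $sce_2$ cut of coefficient $\pi$ appended. Writing $v(\pi)$ for this inner optimal value, we have $LB_{12}=v(\pi_2^*)=\max_\pi v(\pi)$ while $LB_{12}'=v(\pi_2)$. Since $\pi_2$ ranges over the same domain of admissible coefficients as $\pi_2^*$, it is a feasible candidate in the maximization defining $\pi_2^*$, and therefore $v(\pi_2^*)\ge v(\pi_2)$, i.e. $LB_{12}\ge LB_{12}'$, which is exactly \cref{ine1}.

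The step I expect to demand the most care is the bookkeeping behind the first two paragraphs: verifying that $LB_{12}$ and $LB_{12}'$ are built over the identical baseline \cref{t1} and the identical $sce_1$ cut, so that the two improvements become directly comparable once $LB_0$ is cancelled. Once this alignment is secured, the inequality is an immediate consequence of $\pi_2^*$ being a genuine maximizer while $\pi_2$ is only a feasible point of the same program. I would finally remark that the argument is symmetric in the two scenarios and is insensitive to the order in which the batch subproblems are solved, consistent with the paper's observation that different orderings alter the individual $d(sce_i)$ but not the validity of \cref{ine1}.
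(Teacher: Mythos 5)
Your proposal is correct and follows essentially the same route as the paper: the key step---that $\pi_2^*$ is by definition the maximizer of the inner value $v(\pi)$ of the master carrying the common $sce_1$ cut, while $\pi_2$ is merely a feasible candidate in that same maximization---is exactly the inequality the paper invokes in its case (2). The only cosmetic difference is that the paper first splits off the case where the cut $(\pi_2,1)$ fails to cut off the optimal solution $x^*$ of \cref{pi1-batch}; your domination argument covers that case automatically, so the streamlined version is fine.
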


\begin{proof}
We denote the optimal solution of (\ref{12}) by $x^*$ and consider two cases:

(1) The cut $\theta_2 + \pi_2^{\top} x \geq \Bar{Q}_2(\pi_2, 1)$ can not cut off the first-stage solution $x^*$, then this cut has no contribution to the lower bound improvement. Inequality \cref{ine1} holds obviously for this case.

(2) The cut $\theta_2 + \pi_2^{\top} x \geq \Bar{Q}_2(\pi_2, 1)$ can cut off the first-stage solution $x^*$, then we have 
\begin{align}
&\left\{\max_{\pi} \min_{x, \theta} c^{\top}x + \theta_1 + \theta_2, \cref{pi2-batch}(b\sim f)\right\} \geq \nonumber\\
&\left\{\min_{x, \theta} c^{\top}x + \theta_1 + \theta_2 , \cref{t1}(b\sim d), 
\cref{c1}, \cref{c2}\right\}.\nonumber
\end{align}
Therefore, the inequality \cref{ine1} also holds.
\end{proof}

\begin{remark}
\Cref{thm-triangle} means that compared with the existing method developed in \cite{chen2022generating}, the current \Cref{a1} can make the master problem updated more frequently and can fully utilize the role of each Lagrangian cut. Therefore, we can get larger lower bound improvement by generating fewer cuts.
\end{remark}

Further, we extend the result to the general one through replacing $sce_1$ and $sce_2$ by 
two batches $B_1 = \left\{s^1_i\right\}_{i = 1}^{m_{b1}}$ and $B_2 = \left\{s^2_j\right\}_{j = 1}^{m_{b2}}$, respectively.
$d(B_1) + d(B_2)$ and $d(B_1, B_2)$ individually represents the lower bound improved by generating Lagrangian cut for scenarios in $B_1$ and $B_2$ in two consecutive iterations and in a single iteration. Similar to \Cref{thm-triangle}, we get the following triangle inequality
\begin{corollary}\label{corro:ineq}
  $d(B_1) + d(B_2) \geq d(B_1, B_2).$
\end{corollary}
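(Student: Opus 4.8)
The plan is to reproduce the argument of \Cref{thm-triangle} with the two single scenarios $sce_1,sce_2$ replaced by the two batches $B_1,B_2$, treating the whole collection of cuts generated for a batch as a single object. Write $M_0$ for the master problem in force before this pair of iterations (the batch analogue of \cref{t1}), and let $v(\cdot)$ denote its optimal value. In the sequential regime measured by $d(B_1)+d(B_2)$, one first appends to $M_0$ the best-improvement Lagrangian cuts $\{(\pi_s,1)\}_{s\in B_1}$ computed against $M_0$, producing the improvement $d(B_1)$, an updated master $M_1$, and its relaxed optimizer $x^*$; one then appends the best-improvement cuts $\{(\pi_s^*,1)\}_{s\in B_2}$ computed against $M_1$, producing the further improvement $d(B_2)$. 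In the simultaneous regime measured by $d(B_1,B_2)$, one appends to $M_0$ the cuts $\{(\pi_s,1)\}_{s\in B_1}$ together with cuts $\{(\pi_s,1)\}_{s\in B_2}$, all computed against $M_0$. Since the $B_1$ cuts are computed against the same $M_0$ in both regimes, they coincide, exactly as $\pi_1$ did in the proof of \Cref{thm-triangle}.

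The key reduction is then to compare only the effect of the two families of $B_2$ cuts once the common $B_1$ cuts have been installed, that is, once the master equals $M_1$. Mirroring \Cref{thm-triangle}, I would split into two cases according to whether the simultaneous-regime cuts $\{(\pi_s,1)\}_{s\in B_2}$ cut off $x^*$. If they do not, these cuts contribute nothing to the simultaneous bound, so $d(B_1,B_2)=d(B_1)$ and \Cref{corro:ineq} follows at once from $d(B_2)\ge 0$. If they do, I would invoke the observation that the collection $\{(\pi_s,1)\}_{s\in B_2}$ is a feasible point of the $\arg\max$ problem that defines the sequential-regime cuts $\{(\pi_s^*,1)\}_{s\in B_2}$ (best improvement against $M_1$ over valid Lagrangian cuts for the scenarios of $B_2$); consequently the sequential value $v(M_1+\{(\pi_s^*,1)\}_{s\in B_2})$ dominates the simultaneous value $v(M_1+\{(\pi_s,1)\}_{s\in B_2})$. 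Subtracting $v(M_0)$ from both sides yields $d(B_1)+d(B_2)\ge d(B_1,B_2)$.

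The only point requiring care, and the place I expect the main obstacle, is the formalization of ``best-improvement cuts for an entire batch'': the maximization is now over a vector of coefficients $(\pi_s)_{s\in B_2}$ rather than a single $\pi$, and one must verify that the simultaneous-regime batch is admissible in the sequential batch optimization. This is immediate, however, because both optimizations range over the same set of valid Lagrangian cuts for the scenarios of $B_2$ and differ only in that the sequential one is evaluated against the tighter relaxation $M_1$; feasibility of any valid cut family is therefore preserved. Since no intra-batch re-optimization occurs (all cuts of a batch are generated against the same first-stage iterate), the batch cuts behave exactly like the single cuts of \Cref{thm-triangle}, and the extension is a matter of bookkeeping rather than new ideas.
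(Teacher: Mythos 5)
Your proposal is correct and takes essentially the same route as the paper: the paper's own proof of \Cref{corro:ineq} is just a one-line remark that the argument of \Cref{thm-triangle} carries over, and your write-up is precisely that argument with the single scenarios replaced by batches---same two-case split (the simultaneous $B_2$ cuts either fail to cut off $x^*$, or else form a feasible point of the sequential $\arg\max$ problem), same coincidence of the $B_1$ cuts in both regimes. The extra care you take in formalizing the batch-level best-improvement optimization over the vector $(\pi_s)_{s\in B_2}$ is a useful bookkeeping detail the paper leaves implicit, but it is not a different method.
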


\begin{proof}
The proof of is similar to that of \Cref{thm-triangle}.
\end{proof}

It should be mentioned that the the above triangle inequalities are derived from adding Lagrangian cuts to improve the lower bound of the corresponding master problem. Intuitively, these results may be applied to any general cutting plane method in MIP, exploring the delicate balance between the number of cuts added in an iteration and the number of iterations. Although this idea  has actually been mentioned in \cite{balas1996mixed}, the experimental results in that paper did not demonstrate any advantage in generating cuts by batch. Nevertheless, given the advancements in optimization solvers for linear programs over time, it becomes intriguing to investigate the conditions under which generating general cutting planes in MIP by batch, such as Gomory cut, can exhibit superiority.\


\section{Averaged Lagrangian cut}
\label{sec:average}
In this section, we will define `averaged Lagrangian cut' as additional information to accelerate the convergence of \Cref{a1}. 

As can be seen from the generation process of Lagrangian cut, it needs to solve a bi-level program, unlike generating Bender cut, where only a linear program needs to be solved. It is reasonable to believe that the former may provide additional but valuable information to be further utilized. A naive idea is to utilize them to generate new valid inequalities as Lagrangian cuts of the remaining scenarios (unsolved subproblems), which may help to accelerate the convergence of \Cref{a1}. The process sounds like a machine learning process, where the generated Lagrangian cuts for some scenarios are obtained through training while the unsolved Lagrangian cuts for the other scenrios are obtained through testing the trained result. This conversely implies that machine learning may be a potential way to learn Lagrangian cut in the future study. Motivated by the work in \cite{bertsimas2023stochastic}, we try to acquire new Lagrangian cuts through averaging the known ones. To this task, we give the definition of cut strength first.

 

\begin{definition}[cut strength]
\label{violation}
For a Lagrangian cut $\theta_s \geq \Bar{Q}_s(\pi, 1) - \pi^{\top} x$, the strength at $\hat{x}$ is given by  
    \begin{align}
        V(\hat{x}, \pi) &= Q(\hat{x}, s) - q_s(\hat{x}, \pi) ,       
      \end{align}  
where $q_s(\hat{x}, \pi)=\Bar{Q}_s(\pi, 1) - \pi^{\top} \hat{x}$ measures the value of $\theta_s$ associated with this cut given the input $\hat{x}$.
\end{definition}

It is obvious that the cut strength $V(\hat{x}, \pi)$ measures the gap between the current Lagrangian cut $(\pi, 1)$ and the most violated cut at $\hat{x}$. \Cref{fig:example} presents an illustration of this definition, in which the black line represents the Lagrangian cut $(\pi, 1)$, and the red vertical line intersects with x-axis at point $\hat{x}$, and we thus have $V(\hat{x}, \pi) = \triangle h$.

\begin{figure}[htbp]
	\centering
	
	\begin{minipage}{0.8\linewidth}
		\vspace{3pt}
		\centerline{\includegraphics[width=\textwidth]{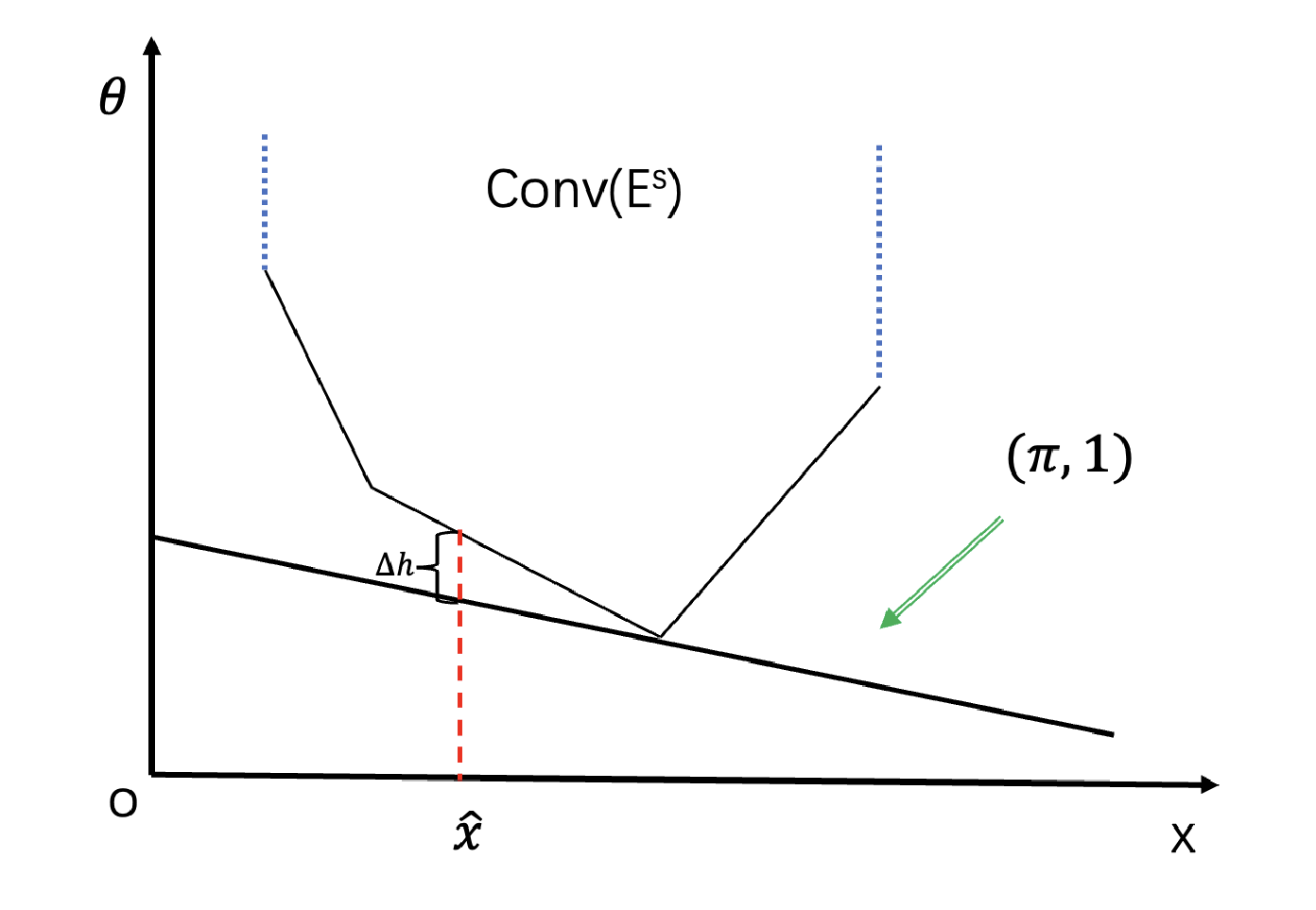}}
	 
	\end{minipage}

	\caption{An illustration for \Cref{violation}}
	\label{fig:example}
\end{figure}

\begin{remark}
The cut strength fucntion $V(\hat{x}, \pi)$ is non-negative and Lipschitz continuous with respect to the variable $\pi$. The reason of the latter is that it is the difference of two Lipschitz continuous functions. Moreover, $V(\hat{x}, \pi_s) = 0$ for each scenario $s$, where $(\pi_s, 1)$ is the most violated Lagrangian cut at $\hat{x}$. 
\end{remark}

We then give the important concept serving for generating new Lagrangian cut. 

\begin{definition}[averaged Lagrangian cut] Given a scenario set $S$, $\forall\Bar{S}\subseteq S$ and $\forall s\in\Bar{S}$, denote the Lagrangian cut about $s$ by $(\pi_s, 1)$, then the cut $(\sum_{s\in \Bar{S}}\pi_s/\lvert \Bar{S} \rvert, 1)$ is called averaged Lagrangian cut.
\end{definition}

The following proposition guarantees the quality of the averaged Lagrangian cut.  

\begin{proposition}
    Fix current relaxed optimal first-stage solution $\hat{x}$. Denote $v^2$ by the variance of the optimal dual solutions, that is $v^2 = \frac{1}{\lvert S \rvert} \sum_{s \in S}\Vert\pi_s - \Bar{\pi}^S \Vert^2$, $\Bar{\pi}^S = \frac{1}{\lvert S \rvert}\sum_{s \in S} \pi_s$, $\Bar{\pi}^{\Bar{S}} = \frac{1}{\lvert \Bar{S} \rvert}\sum_{s \in \Bar{S}} \pi_s$. There exists constants $L, M > 0$, such that for any $\delta \in (0,e^{-1})$, when $\Bar{S}$ is sampled from S without replacement in a fixed size, we have with probability higher than $1 - 3\delta$, such that:
    \begin{align}
        \label{pro1}
        \sum_{s \notin \Bar{S}}V(\hat{x}, \Bar{\pi}^{\Bar{S}}) \leq L\sqrt{\lvert S \backslash   \Bar{S} \rvert}v + D\sqrt{\lvert S\backslash \Bar{S} \rvert \log(1/\delta)}
    \end{align}
where 
    \begin{align}
        D = LM\sqrt{n}[\sqrt{\lvert S\rvert}(\frac{1}{\lvert \Bar{S}\rvert} - \frac{1}{\lvert S \rvert})^{1/2} + (\frac{1}{\lvert S \backslash \Bar{S} \rvert} - \frac{1}{\lvert S \rvert})^{1/4}]
    \end{align}
\end{proposition}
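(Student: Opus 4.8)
The plan is to reduce the statement to a concentration inequality for sampling without replacement, after first converting the geometric quantity $V$ into a distance between dual coefficients. The starting observation is that, by the Remark following the definition of cut strength, $V(\hat x,\cdot)$ is nonnegative, Lipschitz in $\pi$ with some constant $L$, and vanishes at the exactly separated coefficient, i.e. $V(\hat x,\pi_s)=0$ for each scenario $s$. Hence for every unsolved scenario $s\notin\bar S$ one gets the pointwise bound $V(\hat x,\bar\pi^{\bar S})=V(\hat x,\bar\pi^{\bar S})-V(\hat x,\pi_s)\le L\,\lVert\bar\pi^{\bar S}-\pi_s\rVert$. Summing over $s\notin\bar S$ and applying Cauchy--Schwarz turns the left-hand side of \cref{pro1} into $L\sqrt{\lvert S\setminus\bar S\rvert}\big(\sum_{s\notin\bar S}\lVert\bar\pi^{\bar S}-\pi_s\rVert^2\big)^{1/2}$, which already exposes the factor $\sqrt{\lvert S\setminus\bar S\rvert}$ appearing in the claim. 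From this point the entire task is to control the empirical second moment $\sum_{s\notin\bar S}\lVert\bar\pi^{\bar S}-\pi_s\rVert^2$.

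Next I would perform a bias--variance split $\bar\pi^{\bar S}-\pi_s=(\bar\pi^{\bar S}-\bar\pi^S)+(\bar\pi^S-\pi_s)$ and use the triangle inequality in $\ell^2$ to separate the \emph{sampling bias} of the subsample mean, $\lVert\bar\pi^{\bar S}-\bar\pi^S\rVert$, from the \emph{population dispersion} term $\sum_{s\notin\bar S}\lVert\bar\pi^S-\pi_s\rVert^2$. The dispersion term is a sum of the bounded statistic $g(s)=\lVert\bar\pi^S-\pi_s\rVert^2$ over the complement of a without-replacement sample, and its mean is exactly $\tfrac{\lvert S\setminus\bar S\rvert}{\lvert S\rvert}\sum_{s\in S}\lVert\bar\pi^S-\pi_s\rVert^2=\lvert S\setminus\bar S\rvert\,v^2$, which after the square root produces the leading term $L\sqrt{\lvert S\setminus\bar S\rvert}\,v$. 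The clean algebraic point here is that the ANOVA cross term vanishes, since $\sum_{s\in S}(\bar\pi^S-\pi_s)=0$ by definition of the full-sample mean, so the population variance enters only through $v$.

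The heart of the argument, and the step I expect to be the main obstacle, is the concentration under sampling without replacement. I would invoke a Hoeffding--Serfling-type bound (equivalently, the bounded-differences inequality specialized to without-replacement sampling, since exchanging one sampled index perturbs each statistic by a controlled amount). Two deviation events must be handled: first, the subsample mean $\bar\pi^{\bar S}$ around the full mean $\bar\pi^S$, whose finite-population variance yields the correction factor $(\tfrac{1}{\lvert\bar S\rvert}-\tfrac{1}{\lvert S\rvert})^{1/2}$; and second, the dispersion statistic around its mean $\lvert S\setminus\bar S\rvert v^2$, whose deviation carries $(\tfrac{1}{\lvert S\setminus\bar S\rvert}-\tfrac{1}{\lvert S\rvert})$ before the outer square root and hence the $1/4$ exponent in $D$. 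Applying each inequality coordinatewise, bounding $\lVert\cdot\rVert_2\le\sqrt n\,\lVert\cdot\rVert_\infty$, and using the uniform bound $\lVert\pi_s\rVert\le M$ on the compact dual coefficient set is where the factor $M\sqrt n$ enters. The delicate bookkeeping is to combine $\sqrt{a+b}\le\sqrt a+\sqrt b$ with the two tail bounds so that the error terms assemble into precisely the stated $D$.

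Finally I would close with a union bound: each of the tail events (the subsample-mean deviation, the dispersion deviation, and one auxiliary event used to replace empirical quantities by their population counterparts) is arranged to fail with probability at most $\delta$ for $\delta\in(0,e^{-1})$, so all three hold simultaneously with probability at least $1-3\delta$, explaining the $3\delta$ in the statement. On that event, substituting the two deviation bounds back through the Cauchy--Schwarz step of the first paragraph and collecting the $\sqrt{\log(1/\delta)}$ factors gives \cref{pro1} with $D$ as displayed. The only genuinely technical ingredient is the without-replacement concentration with the correct finite-population corrections; everything else is the Lipschitz reduction, Cauchy--Schwarz, and the variance decomposition.
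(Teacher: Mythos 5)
Your strategy is, at the level of ideas, exactly the one the paper intends: the paper's own ``proof'' is a single sentence deferring to \cite{bertsimas2023stochastic}, and what you reconstruct --- the Lipschitz reduction $V(\hat{x},\Bar{\pi}^{\Bar{S}})\le L\lVert\Bar{\pi}^{\Bar{S}}-\pi_s\rVert$ via $V(\hat{x},\pi_s)=0$, the bias/dispersion split around $\Bar{\pi}^S$, Hoeffding--Serfling-type concentration for sampling without replacement (whose finite-population corrections and the restriction $\delta<e^{-1}$ are the fingerprints of the Bernstein--Serfling inequalities), and a union bound over three events giving $1-3\delta$ --- is precisely that style of argument, spelled out in more detail than the paper provides.

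However, your bookkeeping of the leading term contains a genuine gap, not a typo. After Cauchy--Schwarz you carry the prefactor $L\sqrt{\lvert S\setminus\Bar{S}\rvert}$ in front of $\bigl(\sum_{s\notin\Bar{S}}\lVert\Bar{\pi}^{\Bar{S}}-\pi_s\rVert^2\bigr)^{1/2}$; replacing the dispersion sum by its mean $\lvert S\setminus\Bar{S}\rvert v^2$ and taking the square root then yields $L\sqrt{\lvert S\setminus\Bar{S}\rvert}\cdot\sqrt{\lvert S\setminus\Bar{S}\rvert}\,v=L\lvert S\setminus\Bar{S}\rvert v$, not the claimed $L\sqrt{\lvert S\setminus\Bar{S}\rvert}\,v$: a factor $\sqrt{\lvert S\setminus\Bar{S}\rvert}$ is silently dropped at the very step you say ``produces the leading term,'' and the same extra factor propagates into your deviation terms. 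This cannot be repaired by rearranging Cauchy--Schwarz/Minkowski, because your chain controls each summand only through nonnegativity, the Lipschitz bound, and $V(\hat{x},\pi_s)=0$; under those properties alone each term can genuinely be of order $Lv$ (take cut-strength functions behaving like $L\lVert\pi-\pi_s\rVert$ with well-spread $\pi_s$), so the left-hand side can scale linearly in $\lvert S\setminus\Bar{S}\rvert$ and no argument of this shape can terminate at a $\sqrt{\lvert S\setminus\Bar{S}\rvert}\,v$ leading term. What your inequalities actually establish is the bound with leading term $L\lvert S\setminus\Bar{S}\rvert v$, i.e.\ the stated estimate for the \emph{average} $\frac{1}{\lvert S\setminus\Bar{S}\rvert}\sum_{s\notin\Bar{S}}V(\hat{x},\Bar{\pi}^{\Bar{S}})$ rather than the sum; you need to either prove that weaker statement or explain how to bridge the missing factor. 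A secondary error: your remark that the ANOVA cross term vanishes because $\sum_{s\in S}(\Bar{\pi}^S-\pi_s)=0$ is false for the sum restricted to $S\setminus\Bar{S}$, where $\sum_{s\notin\Bar{S}}(\Bar{\pi}^S-\pi_s)=\lvert\Bar{S}\rvert(\Bar{\pi}^{\Bar{S}}-\Bar{\pi}^S)$, so the cross term equals $2\lvert\Bar{S}\rvert\,\lVert\Bar{\pi}^{\Bar{S}}-\Bar{\pi}^S\rVert^2\ge 0$; it is controllable by the same Serfling bound you invoke for the subsample mean, but it does not vanish.
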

\begin{proof}
The result is essentially an extension for Bender cut, and the proof is similar to that in \cite{bertsimas2023stochastic}.  
\end{proof}

 This proposition gives a probabilistic guarantee for the quality of our proposed averaged Lagrangian cut. By solving part of the Lagrangian subproblems, we can obtain a not bad cut for the others with some probability.
In our implementation, we incorporate the averaged Lagrangian cut into the framework of Algorithm 3.1 at the end of each iteration.

\section{Computational Study}
\label{sec:experiment}
In this section, we are going to conduct experiments on our proposed two algorithmic schemes. In \Cref{exact separation} and \Cref{restricted separation}, we compare the gap closed by generating Lagrangian cut by batch with that closed by \cite{chen2022generating} and \cite{rahmaniani2020benders}. And in \Cref{average}, we show the strength of the averaged Lagrangian cut. In \Cref{optimality}, we display the results for solving these instances to optimality by branch-and-cut method while generating Lagrangian cut at root node.\

Three classes of problems are considered, including the stochastic server location problem (sslp), a variant of the stochastic server location problem (sslpv) and the stochastic multi-commodity flow problem (smcf). The sslp problem \cite{ntaimo2005million} is a two-stage SIP with pure binary first-stage and mixed-binary second-stage variables. In this problem, the decision maker has to choose from $n_1$ sites to allocate servers with cost in the first stage. Then in the second stage, the availability of each client would be observed and every available client must be served at some site also with cost. The objective is to minimize the total cost. The sslpv problem \cite{chen2022generating} is a variant of the sslp problem. We generate the instances of them as \cite{chen2022generating}. The smcf problem \cite{crainic2001bundle} contains pure binary first-stage and continuous second-stage variables, in which the decision maker has to choose some edges with capacity constraint from the node-edge graph to transfer commodity flows. Then in the second stage, the demand of each commodity is available and must be transferred from its original node to the destination node by the chosen edges. We generate the stochastic counterpart of instances r04 as \cite{rahmaniani2018accelerating}. Our test includes 24 instances for the sslp problem, 24 instances for the sslpv problem and 6 instances for the smcf problem. The information of these instances is listed in \Cref{T1}.\

\begin{table}[ht]
\caption{Profiles of the three classes of instances}
\label{T1}
\begin{center}
\begin{tabular}{llllll}
\hline
Instances& $\lvert S \rvert$& $n_1^\S$& $n_2$& $m_1$& $m_2$\\
\hline
sslp(40-50)$^\dag$& [50, 200]$^*$& 40& 2040& 1& 90\\
sslp(30-70)& [50, 200]& 30& 2130& 1& 100\\
sslp(20-100)& [50, 200]& 20& 2020& 1& 120\\
sslp(50-40)& [50, 200]& 50& 2050& 1& 90\\
sslpv(40-50)& [50, 200]& 40& 2040& 1& 90\\
sslpv(30-70)& [50, 200]& 30& 2130& 1& 100\\
sslpv(20-100)& [50, 200]& 20& 2020& 1& 120\\
sslpv(50-40)& [50, 200]& 50& 2050& 1& 90\\
smcf(r04.1-r04.6)& [500]& 60& 600& 1&660 \\
\hline
\end{tabular}
\begin{tablenotes}
    \footnotesize
        \item $^\dag$The first and second digit represent the number of locations and of customers, respectively; $^*$There are two cases of $|S|=50$ and $|S|=200$; $^\S$$n_1$, $n_2$, $m_1$ and $m_2$ share the same meanings with those given in \cref{equa:SIP_ori}.   
\end{tablenotes}
\end{center}
\end{table}

At the very first, we will elaborate two paradigms for separating Lagrangian cut by exact and by restricted mode \cite{chen2022generating}. For the exact separation, $\Pi_s$ in Algorithm A.1 is chosen to be a neighborhood of the original point at the Euclidean space, while for the restricted separation, $\Pi_s$ is chosen to be a linear subspace whose basis consists of the coefficient vectors of some already generated Bender cuts, and in \cite{chen2022generating}, they propose an MIP to choose the best basis with a preset size $K$. More specific information can refer to \cite{chen2022generating}. Therefore, in our experiment, we separate the Lagrangian cut by the two paradigms respectively to prove efficiency and applicability of our proposed algorithm.\

- \textbf{Exact}: Exact separation of Lagrangian cut.\

- \textbf{RstrMIP}: Restricted separation of Lagrangian cut.\

These two methods are different cut generators. We incorporate them into the line 11 in Algorithm 3.1 respectively to test our batch algorithm in different circumstances. Furthermore, we list the approach that we will adopt in the experiment here:\

- \textbf{Exact-Tra}: Generating Lagrangian cut exactly by no batch (i.e., for every scenario at each iteration.)\

- \textbf{Exact-Lbb($\beta$)}: Generating Lagrangian cut exactly by batch  and the batch size is $\lfloor \lvert S \rvert \beta\rfloor$.\

- \textbf{RstrMIP-Tra}: Generating Lagrangian cut  with restricted separation by no batch.\

- \textbf{RstrMIP-Lbb($\beta$)}: Generating Lagrangian cut with restricted separation by batch and the batch size is $\lfloor \lvert S \rvert \beta \rfloor$.\

\subsection{Implementation Details}
In our experiment, instances of sslp and smcf  are conducted on a Mac laptop with 16GB RAM and an Apple M1 pro processor, while instances of sslpv are conducted on a Mac laptop with 16GB RAM and an Apple M1 processor. All related programs such as ?(LPs), MIPs and ?(QPs) are solved using the optimization solver Gurobi 10.0.3 for all these instances.\

When implementing the process of generating Lagrangian cut in Algorithm A.1 and A.2, we set the circumstances as in that paper to reproduce their results. In their study, they highlighted that the restricted separation algorithm demonstrated optimal performance when $\delta$ was set to $50\%$ and $K$ was set to 10. Hence, in our subsequent experiments, we set $\delta$ to $50\%$ and $K$ to 10 when addressing the separation problem \cref{equa9}.\

 For Line 8 in Algorithm 3.1, we fix the order of these batches to be solved, that is: if we end up current iteration at batch $t$, then we begin the next iteration from batch $t+1$. We generate Lagrangian cut for instances of sslp and sslpv in two paradigms, that is in exact and in restricted paradigm. For the instances of smcf, we only conduct experiments in the exact paradigm. This decision is based on the observation that the two separation methods perform nearly identically in this problem class.

\subsection{Test for generating Lagrangian cut by batch at root node}
In this section, We test the performance of Algorithm 3.1 in improving lower bound of the relaxed master problem at the root node. We set the time limit to one hour for $\Cref{a1}$ in this section.\

\subsubsection{Exact-Tra vs. Exact-Lbb}
\label{exact separation}
We first examine the impact of different batch sizes ($\beta = 5\%, 10\%, 20\%, 50\%$) in the exact separation paradigm. Because the trending for these instances is similar, we only depict the convergence profile and the changing of number of cuts added with the lower bound improving for one sslp instance (sslp1-30-70-200), one sslpv instance (sslpv1-30-70-200) and one smcf instance (r04.3). \cref{trending for sslp} - \cref{trending for smcf} presents the pictures for the three instances respectively. Specifically, taking the sslpv1-30-70-200 as an example, the left picture (a) represents the changing of lower bound over time and the right one (b) represents the changing of number of cuts added over lower bound in the exact separation paradigm. It is evident that the lower bound experiences the most rapid improvement when $\beta = 5\%$ and $\beta = 10\%$. Moreover, the same lower bound value can be achieved by generating significantly fewer Lagrangian cuts, particularly in the latter stages of the entire process. Furthermore, the performance of batch processing, regardless of its size, consistently outperforms the scenario of no batch processing in terms of both time consumption and the number of added cuts. This aligns with our theoretical analysis presented in the previous sections. Regarding the observed turning points in these figures, it can be elucidated that at the initial stage, the linear relaxation of the Bender master problem is exceedingly weak. However, as time progresses, the relaxation becomes tighter, accompanied by an increase in the number of added cuts. This phenomenon renders it challenging for a single cut to further enhance the lower bound.
\begin{figure}[htbp]

	\begin{minipage}{0.49\linewidth}
		\vspace{3pt}
		\centerline{\includegraphics[width=\textwidth]{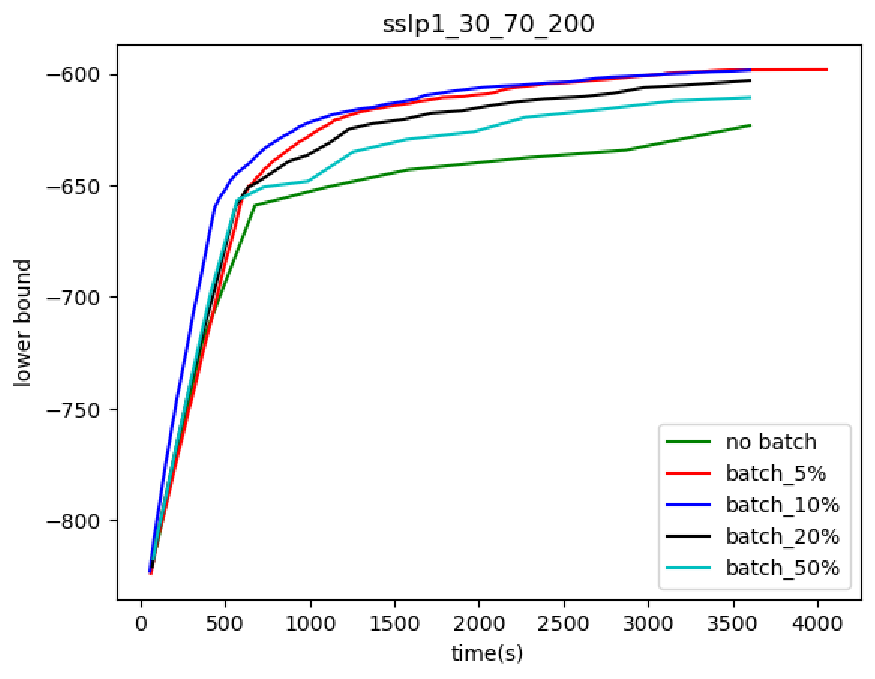}}
	 
		\centerline{(a)}
	\end{minipage}
	\begin{minipage}{0.49\linewidth}
		\vspace{3pt}
		\centerline{\includegraphics[width=\textwidth]{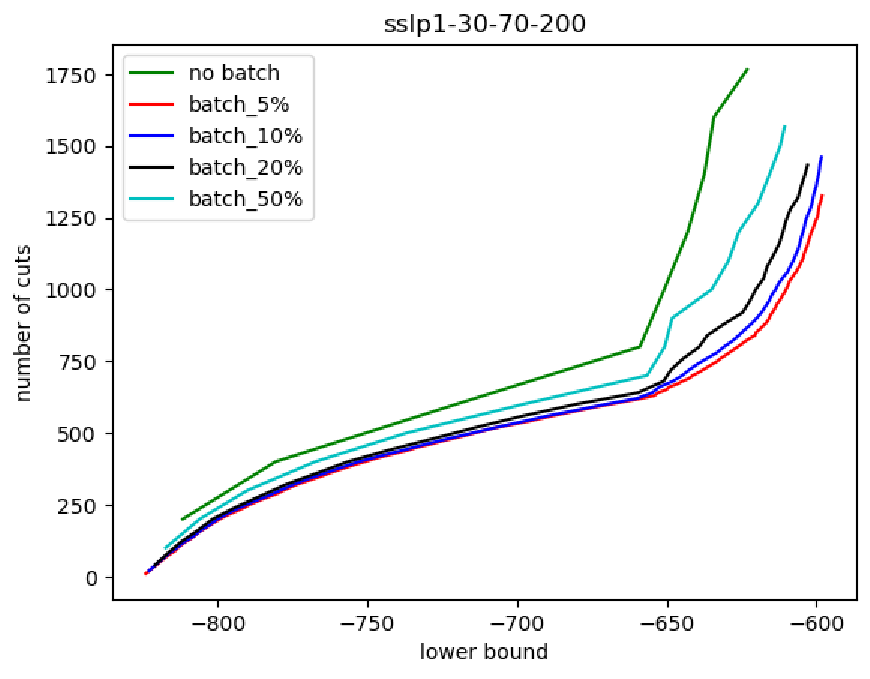}}
	 
		\centerline{(b)}
	\end{minipage}
 
	\caption{Exact-Tra vs. Exact-Lbb($\beta$) on sslp1-30-70-200: (a) lower bound improved; (b) the number of cuts added.}
	\label{trending for sslp}
\end{figure}

\begin{figure}[htbp]

	\begin{minipage}{0.49\linewidth}
		\vspace{3pt}
		\centerline{\includegraphics[width=\textwidth]{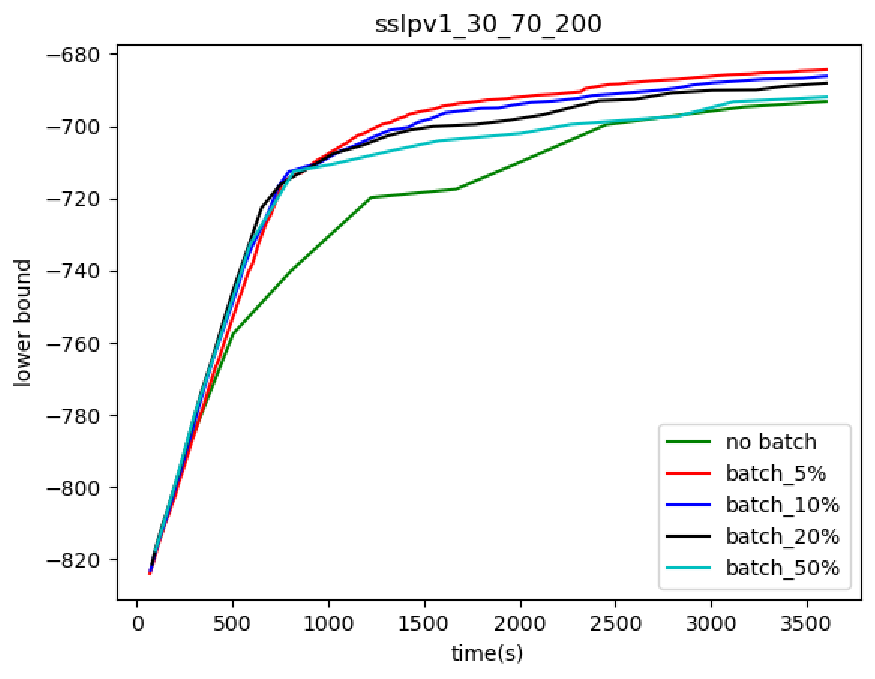}}
	 
		\centerline{(a)}
	\end{minipage}
	\begin{minipage}{0.49\linewidth}
		\vspace{3pt}
		\centerline{\includegraphics[width=\textwidth]{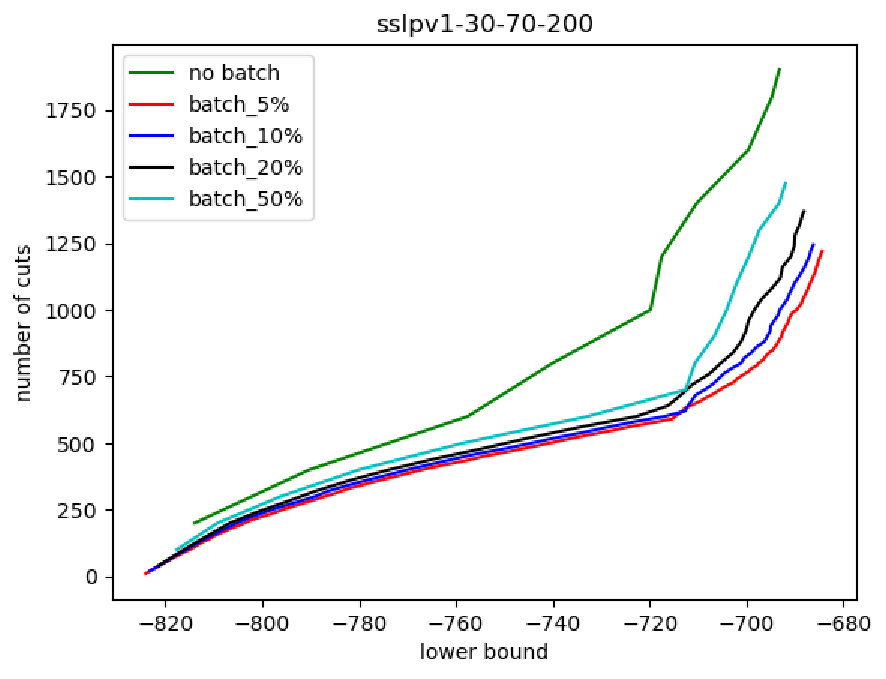}}
	 
		\centerline{(b)}
	\end{minipage}
 
	\caption{Exact-Tra vs. Exact-Lbb($\beta$) on sslpv1-30-70-200: (a) lower bound improved; (b) the number of cuts added.}
	\label{trending for sslpv}
\end{figure}

\begin{figure}[htbp]

	\begin{minipage}{0.49\linewidth}
		\vspace{3pt}
		\centerline{\includegraphics[width=\textwidth]{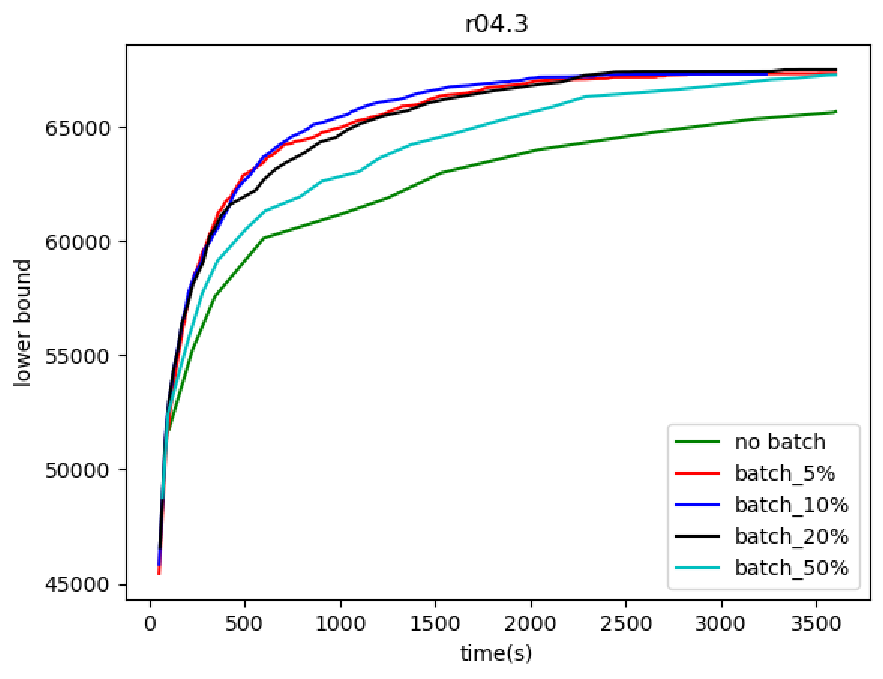}}
	 
		\centerline{(a)}
	\end{minipage}
	\begin{minipage}{0.49\linewidth}
		\vspace{3pt}
		\centerline{\includegraphics[width=\textwidth]{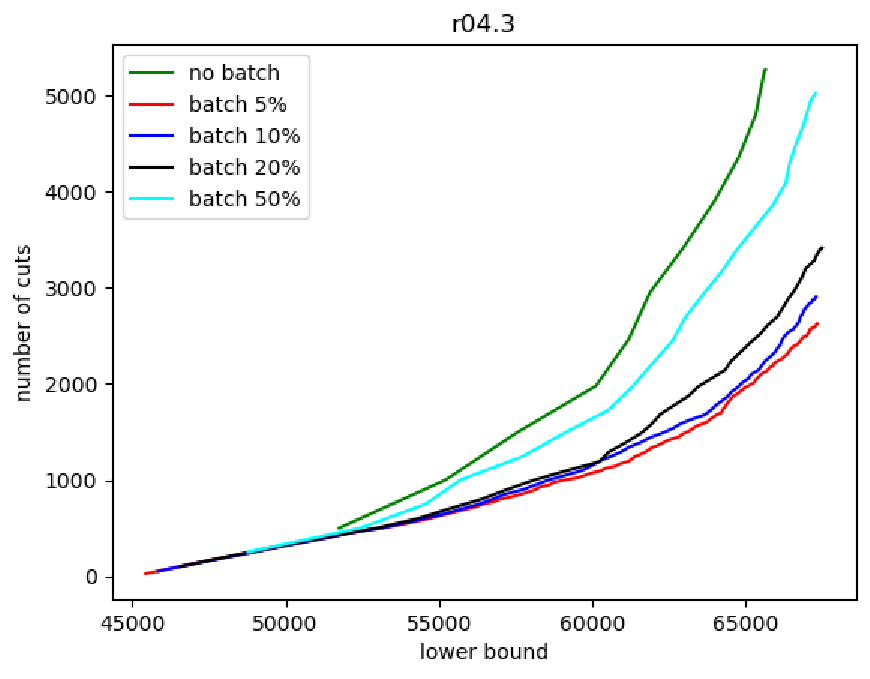}}
	 
		\centerline{(b)}
	\end{minipage}
 
	\caption{Exact-Tra vs. Exact-Lbb($\beta$) on r04.3: (a) lower bound improved; (b) the number of cuts added.}
	\label{trending for smcf}
\end{figure}

In order to show the overall performance of our proposed algorithm across all these instances, we make use of the notion of $\gamma$-gap-colsed profile as in \cite{chen2022generating}: given a set of problem instances $P$ and a set of cut generation methods $M$ (e.g. different batch sizes in our situation), $g_p$ denotes the largest gap closed by any of these method in $M$ for instance $p$. The $\gamma$-gap-closed profile is defined with respect to certain threshold $\gamma \in [0,1]$. Given a method $m$ and instance $p$, we define $t^{\gamma}_{p,m}$ as the earliest time of closing the gap by at least $\gamma g_p$. The $\gamma$-gap-closed profile is a figure representing the cumulative growth (distribution function) of the $\gamma$-gap-closed ratio $\rho^{\gamma}_{m}(\tau)$ over time $\tau$ where
$$
\rho^{\gamma}_{m}(\tau) = \frac{\lvert \left\{p \in P: t^{\gamma}_{p,m} \leq \tau \right\} \rvert}{\lvert P \rvert}.
$$

\begin{figure}[htbp]

	\begin{minipage}{0.49\linewidth}
		\vspace{3pt}
		\centerline{\includegraphics[width=\textwidth]{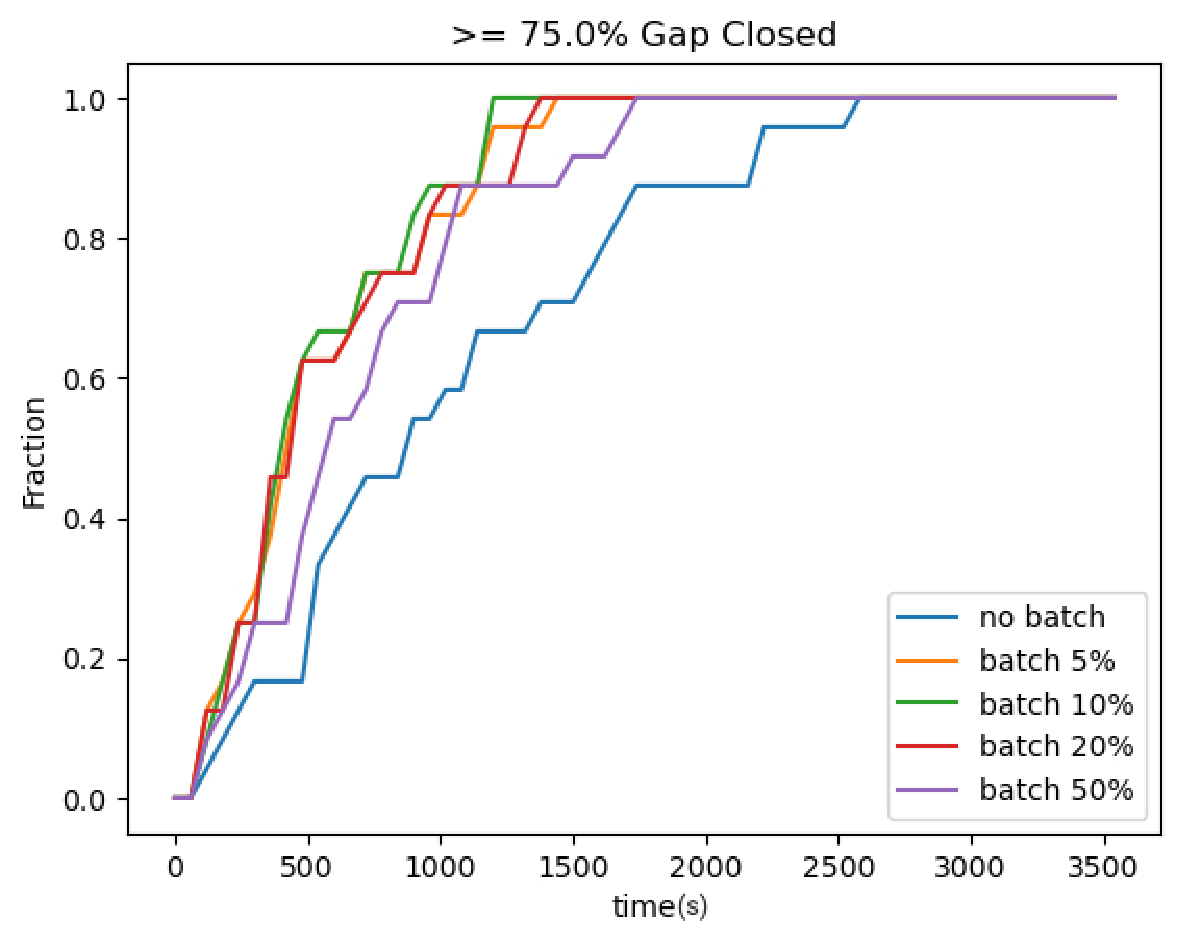}}
	 
		\centerline{(a)}
	\end{minipage}
	\begin{minipage}{0.49\linewidth}
		\vspace{3pt}
		\centerline{\includegraphics[width=\textwidth]{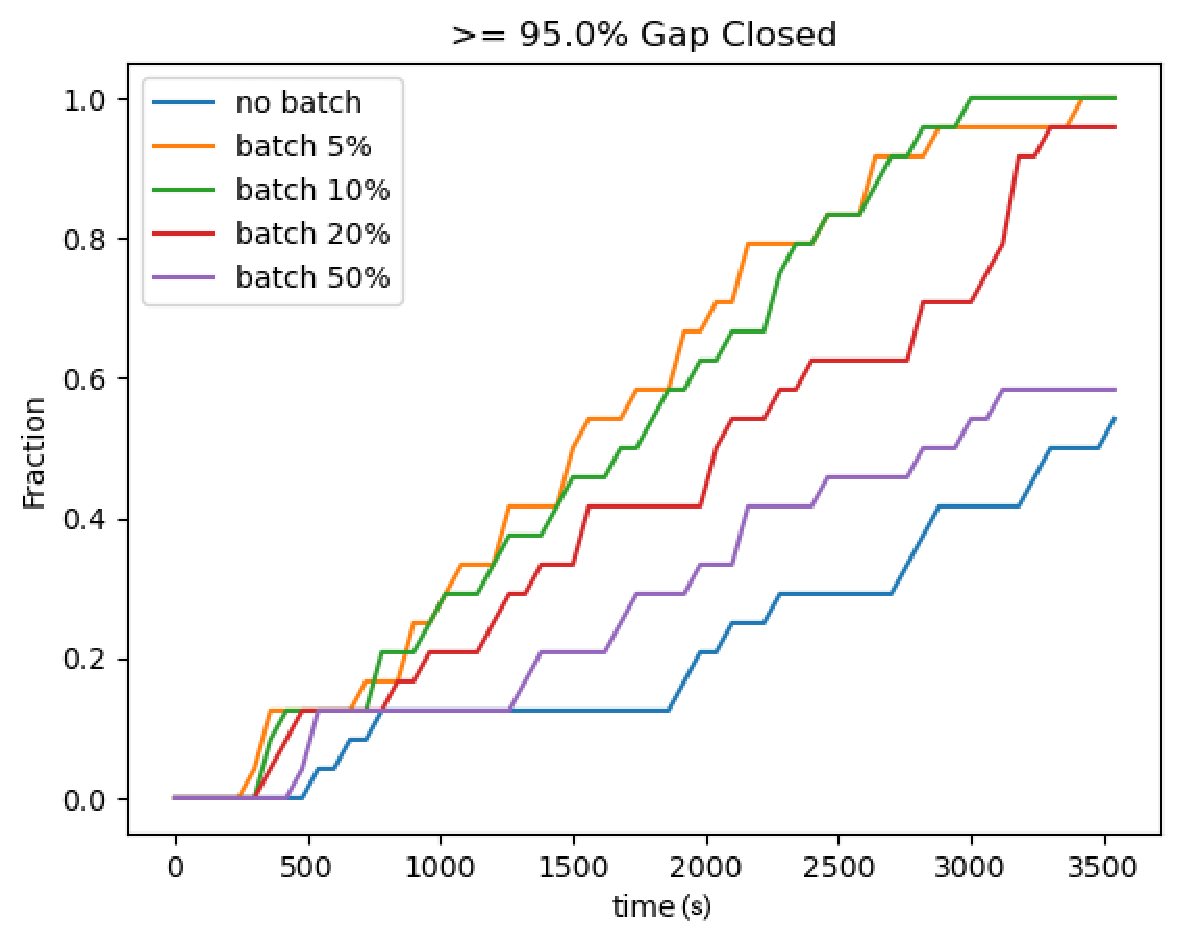}}
	 
		\centerline{(b)}
	\end{minipage}
 
	\caption{$\gamma$-gap-closed profile for sslp instance obtained by Exact separation with $\gamma = 75\%$ (left) and $\gamma = 95\%$ (right)}
	\label{exact_sslp_gap}
\end{figure}

\begin{figure}[htbp]

	\begin{minipage}{0.49\linewidth}
		\vspace{3pt}
		\centerline{\includegraphics[width=\textwidth]{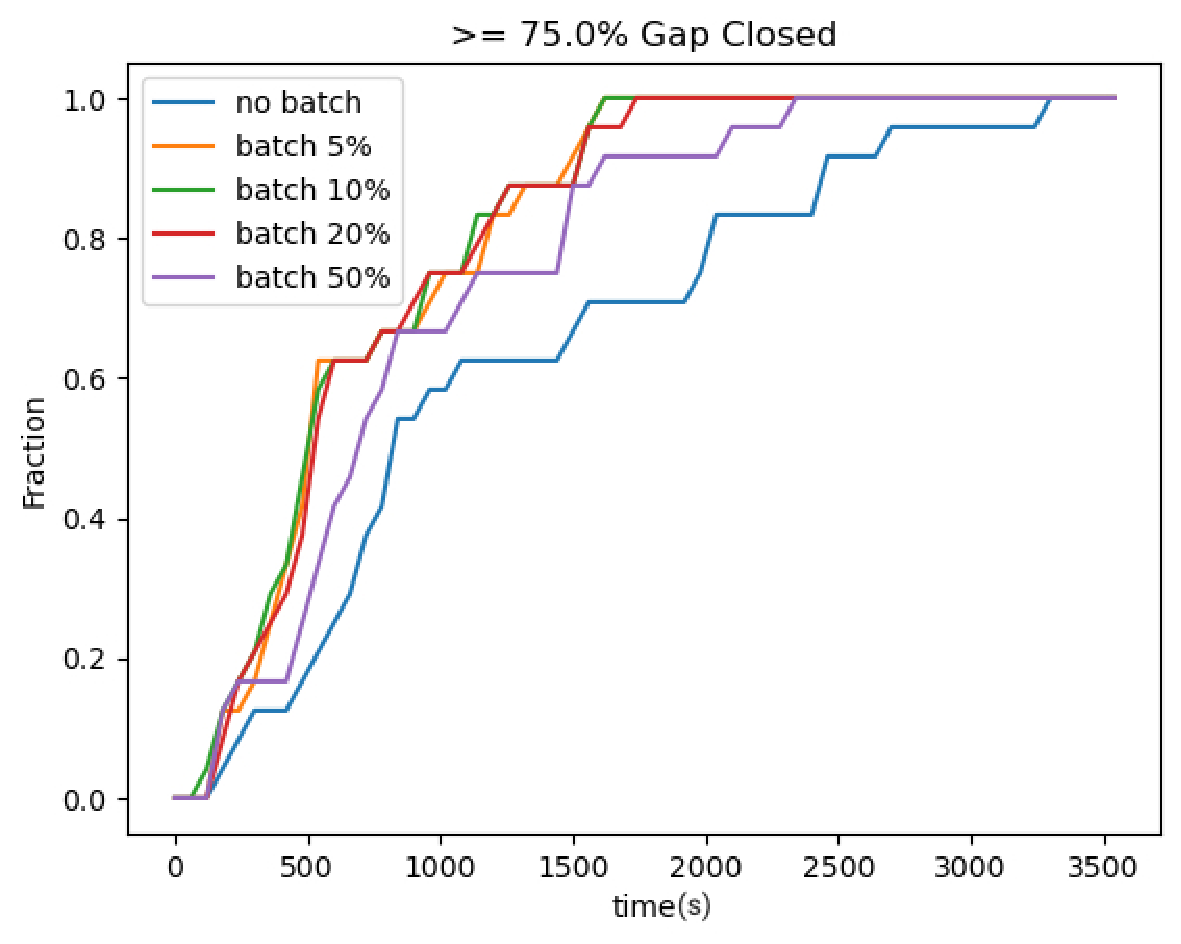}}
	 
		\centerline{(a)}
	\end{minipage}
	\begin{minipage}{0.49\linewidth}
		\vspace{3pt}
		\centerline{\includegraphics[width=\textwidth]{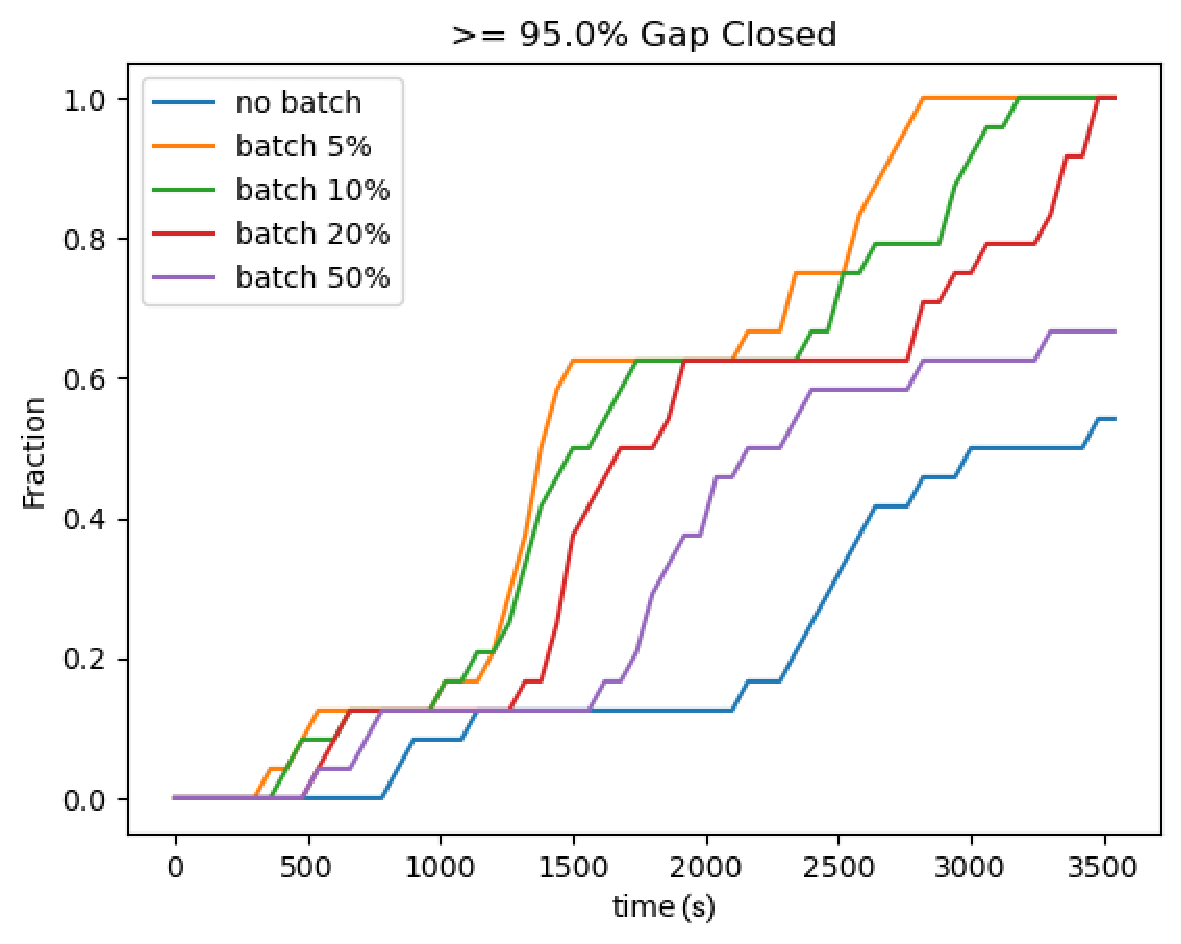}}
	 
		\centerline{(b)}
	\end{minipage}
 
	\caption{$\gamma$-gap-closed profile for sslpv instance obtained by Exact separation with $\gamma = 75\%$ (left) and $\gamma = 95\%$ (right)}
	\label{exact_sslpv_gap}
\end{figure}

\Cref{exact_sslp_gap} and \Cref{exact_sslpv_gap} show the $\gamma$-gap-closed profile in the case of exact separation of the sslp and sslpv instances, respectively. From the two figures we can see that our proposed algorithm dominates the classic one to some extend in the case of exact separation. 
Moreover, the advantage is particularly pronounced in the later stages of the algorithm, as evident in the $95\%$-gap-closed profile in the two figures. Notably, the smaller the batch size, the more superior the performance, aligning with our theoretical results.

\subsubsection{RstrMIP-Tra vs. RstrMIP-Lbb}
\label{restricted separation}
We continue to compare the experimental results between \textbf{RstrMIP-Tra} and \textbf{RstrMIP-Lbb}($\beta$), shown in \Cref{trending for res_sslp} and \Cref{trending for res_sslpv} for sslp1-40-50-200 and sslpv1-40-50-200, respectively. It is clear that our proposed algorithm continues to perform well in the restricted separation paradigm, with the observed situation closely mirroring that in the exact separation paradigm.


\begin{figure}[htbp]

	\begin{minipage}{0.49\linewidth}
		\vspace{3pt}
		\centerline{\includegraphics[width=\textwidth]{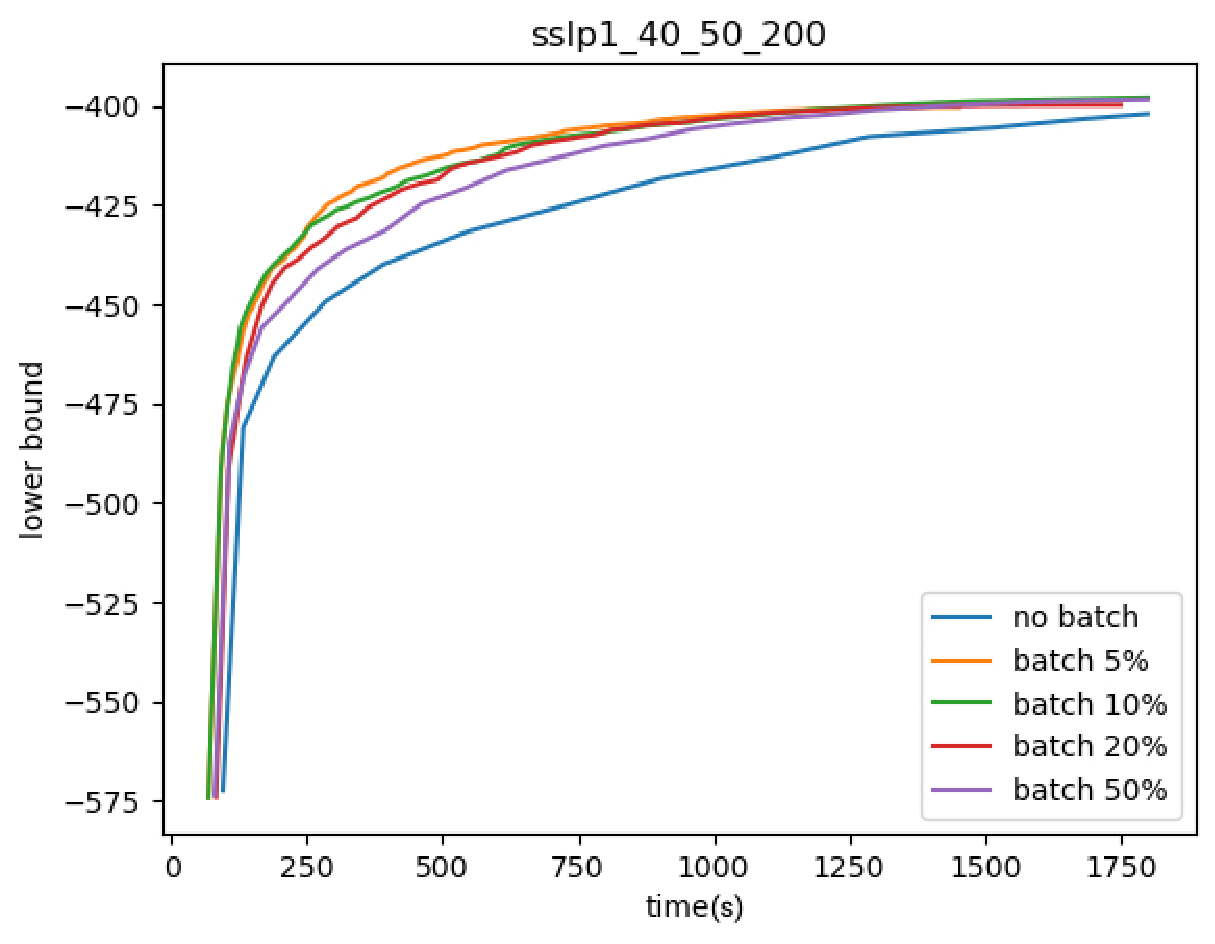}}
	 
		\centerline{(a)}
	\end{minipage}
	\begin{minipage}{0.49\linewidth}
		\vspace{3pt}
		\centerline{\includegraphics[width=\textwidth]{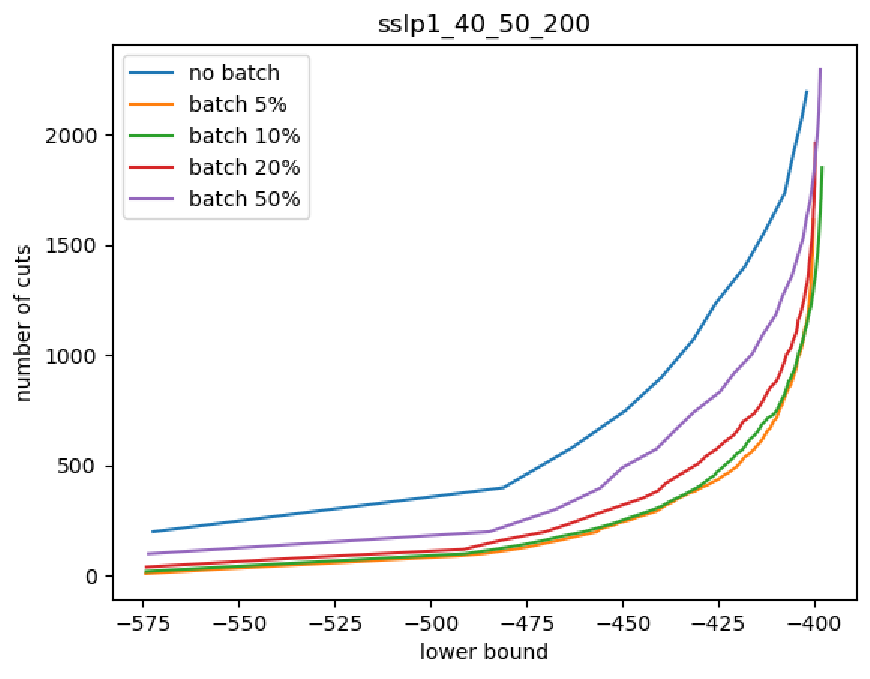}}
	 
		\centerline{(b)}
	\end{minipage}
 
	\caption{RstrMIP-Tra and RstrMIP-Lbb($\beta$) on sslp1-40-50-200: (a) lower bound improved; (b) the number of cuts added.}
	\label{trending for res_sslp}
\end{figure}

\begin{figure}[htbp]

	\begin{minipage}{0.49\linewidth}
		\vspace{3pt}
		\centerline{\includegraphics[width=\textwidth]{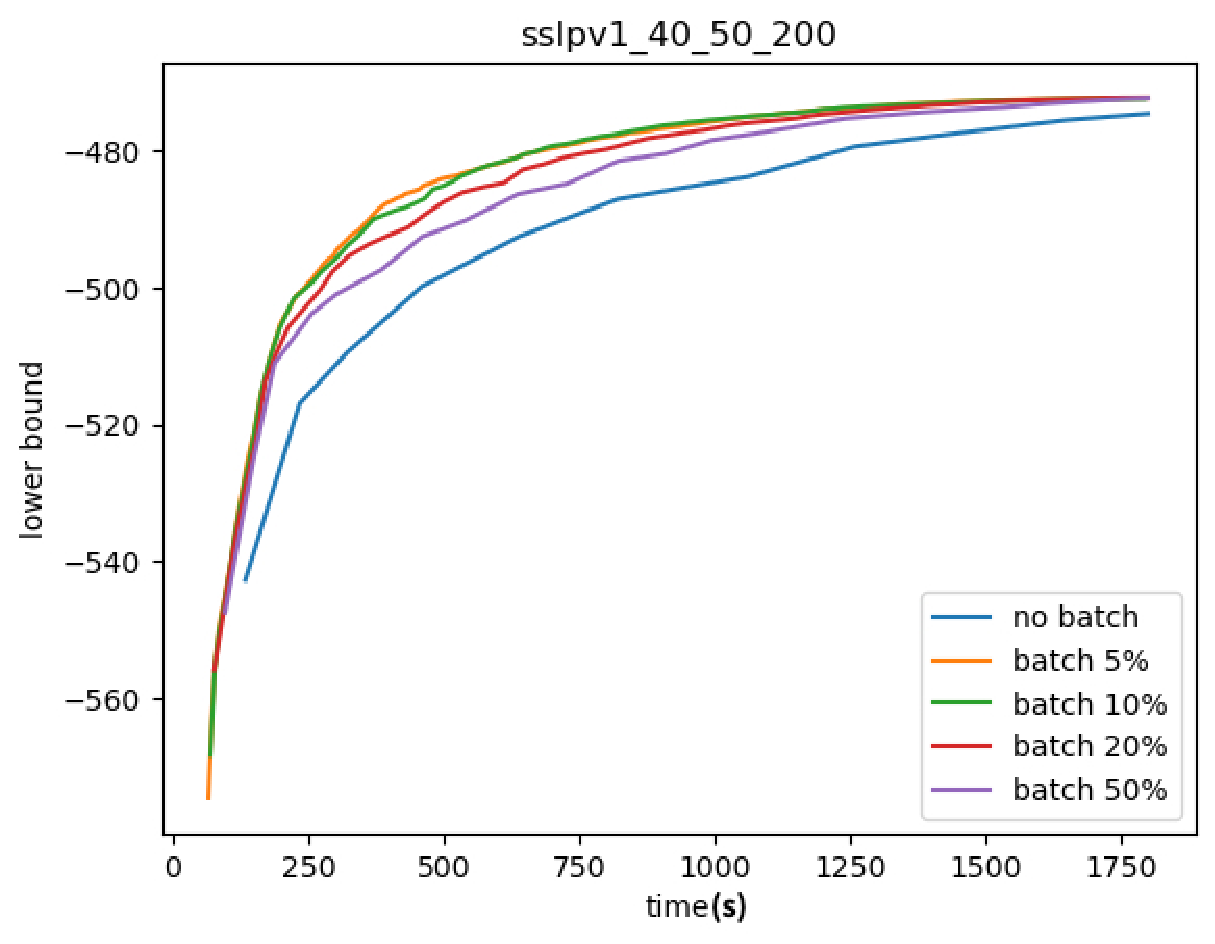}}
	 
		\centerline{(a)}
	\end{minipage}
	\begin{minipage}{0.49\linewidth}
		\vspace{3pt}
		\centerline{\includegraphics[width=\textwidth]{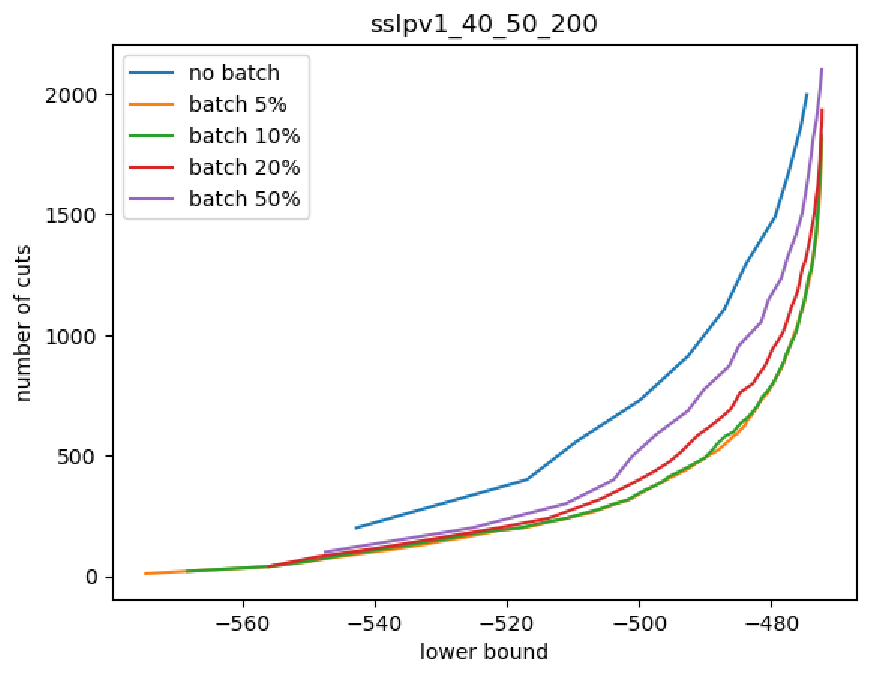}}
	 
		\centerline{(b)}
	\end{minipage}
 
	\caption{RstrMIP-Tra and RstrMIP-Lbb($\beta$) on sslpv1-40-50-200: (a)  lower bound imroved; (b) the number of cuts added.}
	\label{trending for res_sslpv}
\end{figure}


\begin{figure}[htbp]

	\begin{minipage}{0.49\linewidth}
		\vspace{3pt}
		\centerline{\includegraphics[width=\textwidth]{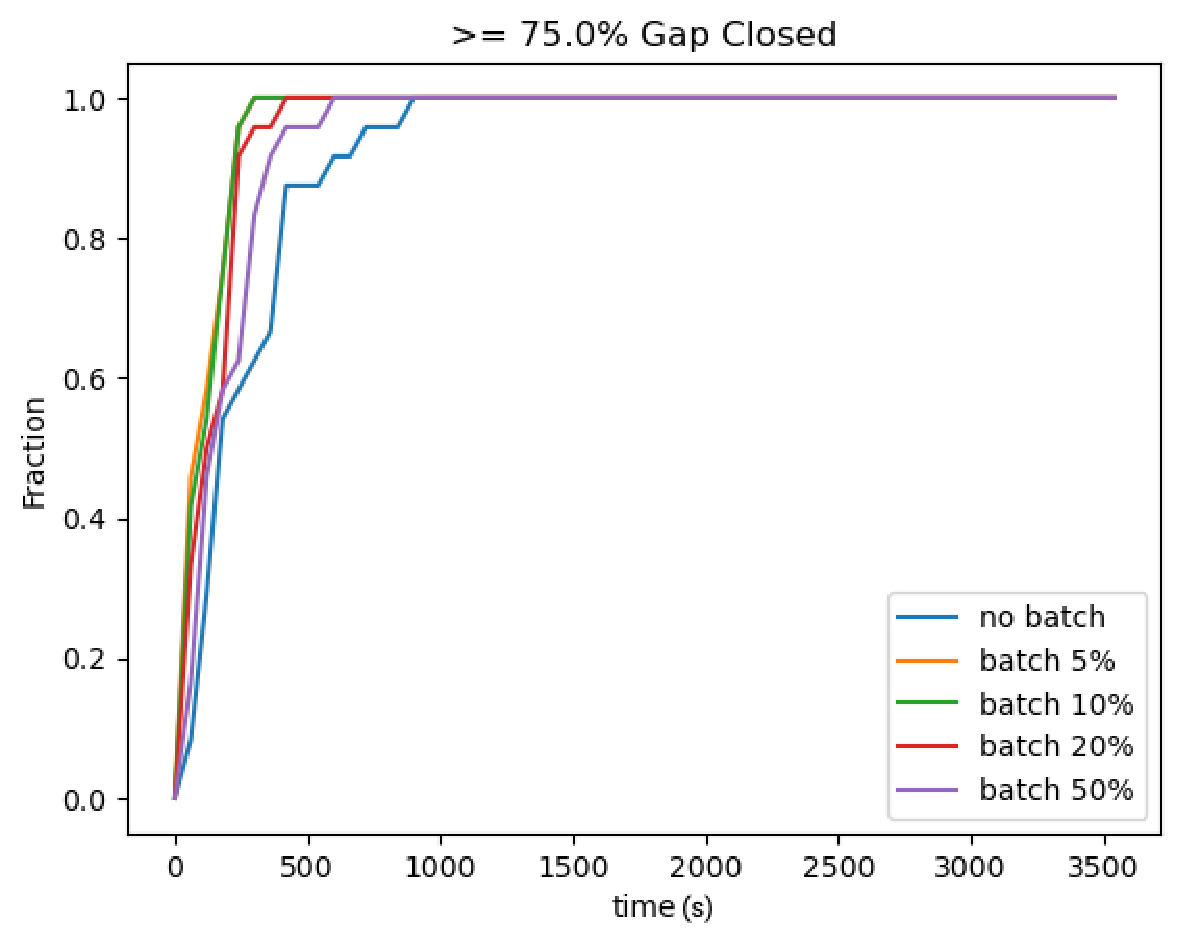}}
	 
		\centerline{(a)}
	\end{minipage}
	\begin{minipage}{0.49\linewidth}
		\vspace{3pt}
		\centerline{\includegraphics[width=\textwidth]{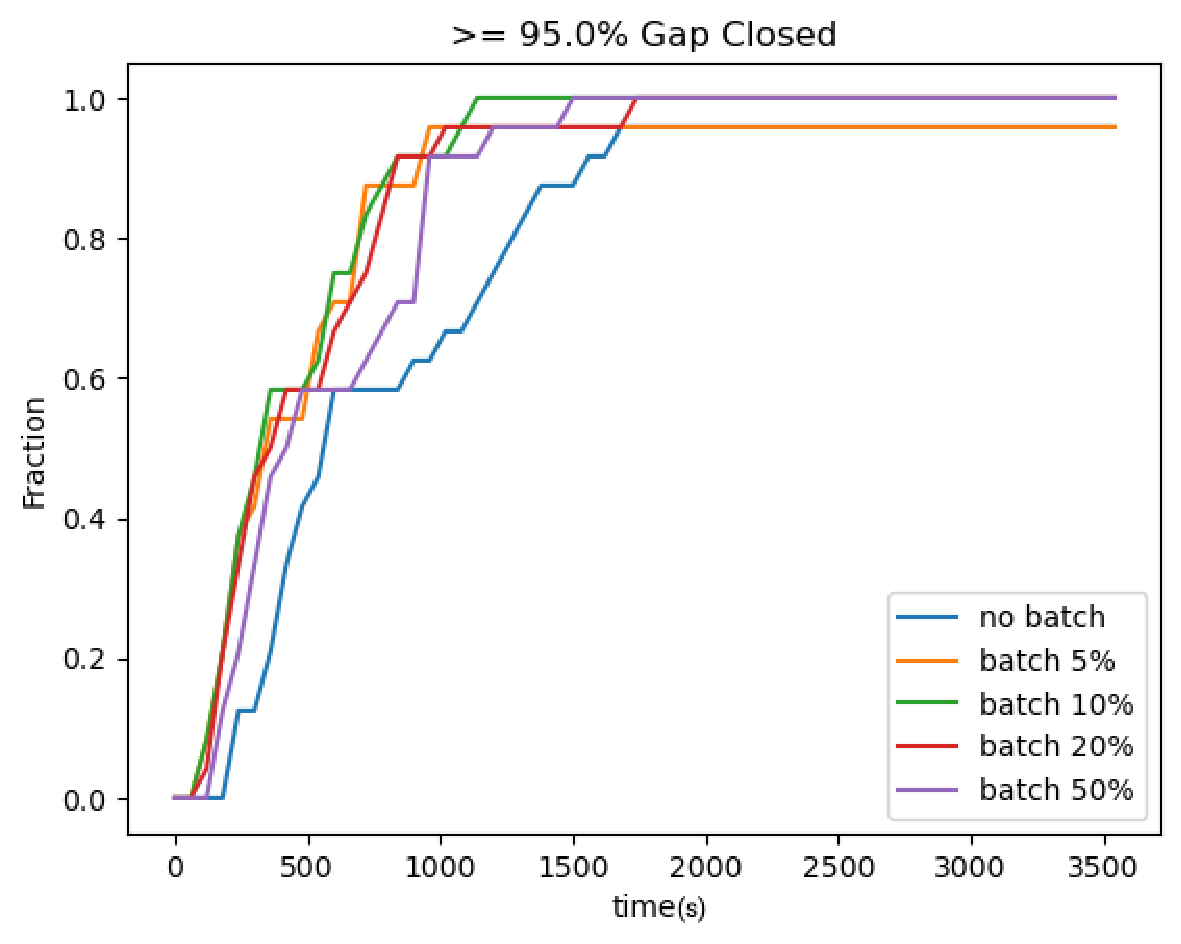}}
	 
		\centerline{(b)}
	\end{minipage}
 
	\caption{$\gamma$-gap-closed profile for sslp instances obtained by restrictive separation with $\gamma = 75\%$ (left) and $\gamma = 95\%$ (right)}
	\label{res_sslp_gap}
\end{figure}

\begin{figure}[htbp]

	\begin{minipage}{0.49\linewidth}
		\vspace{3pt}
		\centerline{\includegraphics[width=\textwidth]{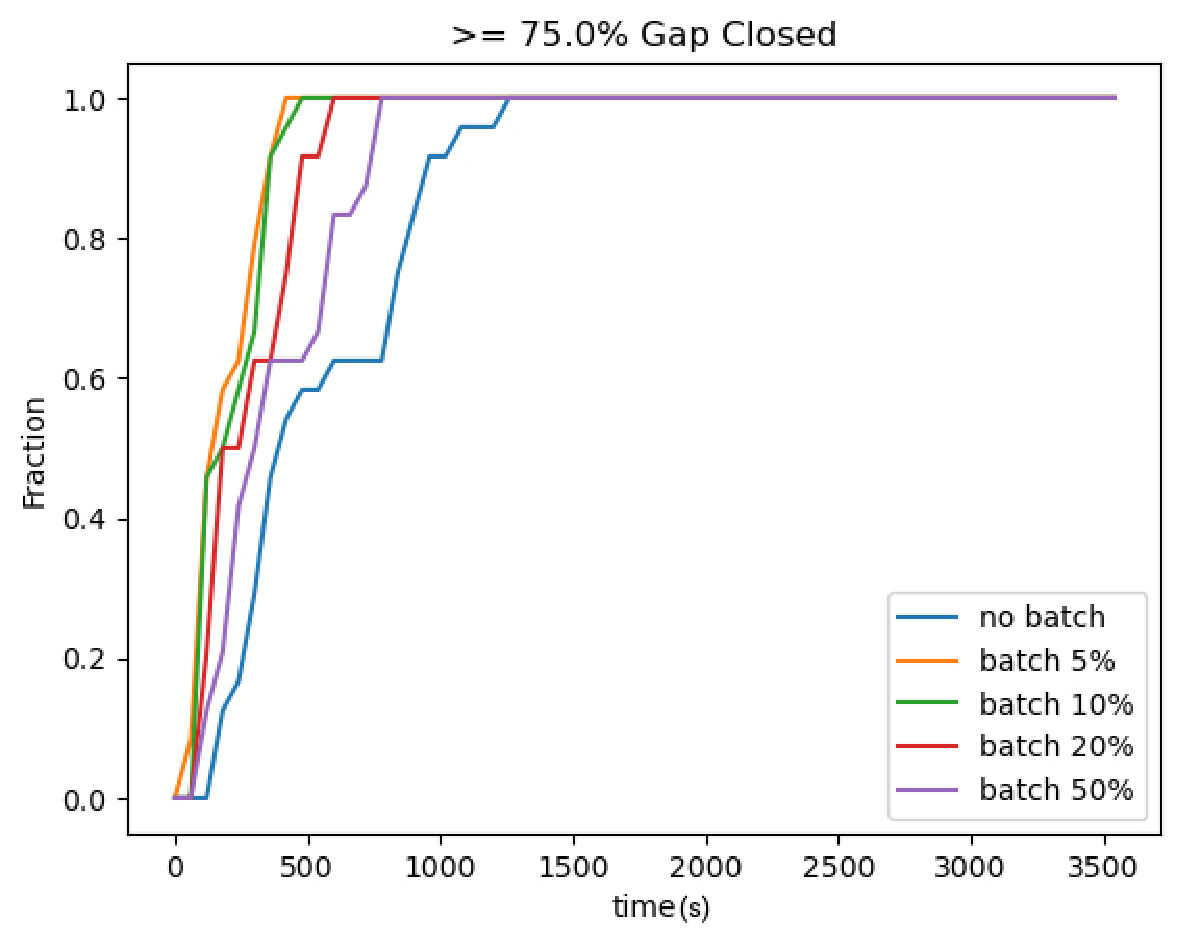}}
	 
		\centerline{(a)}
	\end{minipage}
	\begin{minipage}{0.49\linewidth}
		\vspace{3pt}
		\centerline{\includegraphics[width=\textwidth]{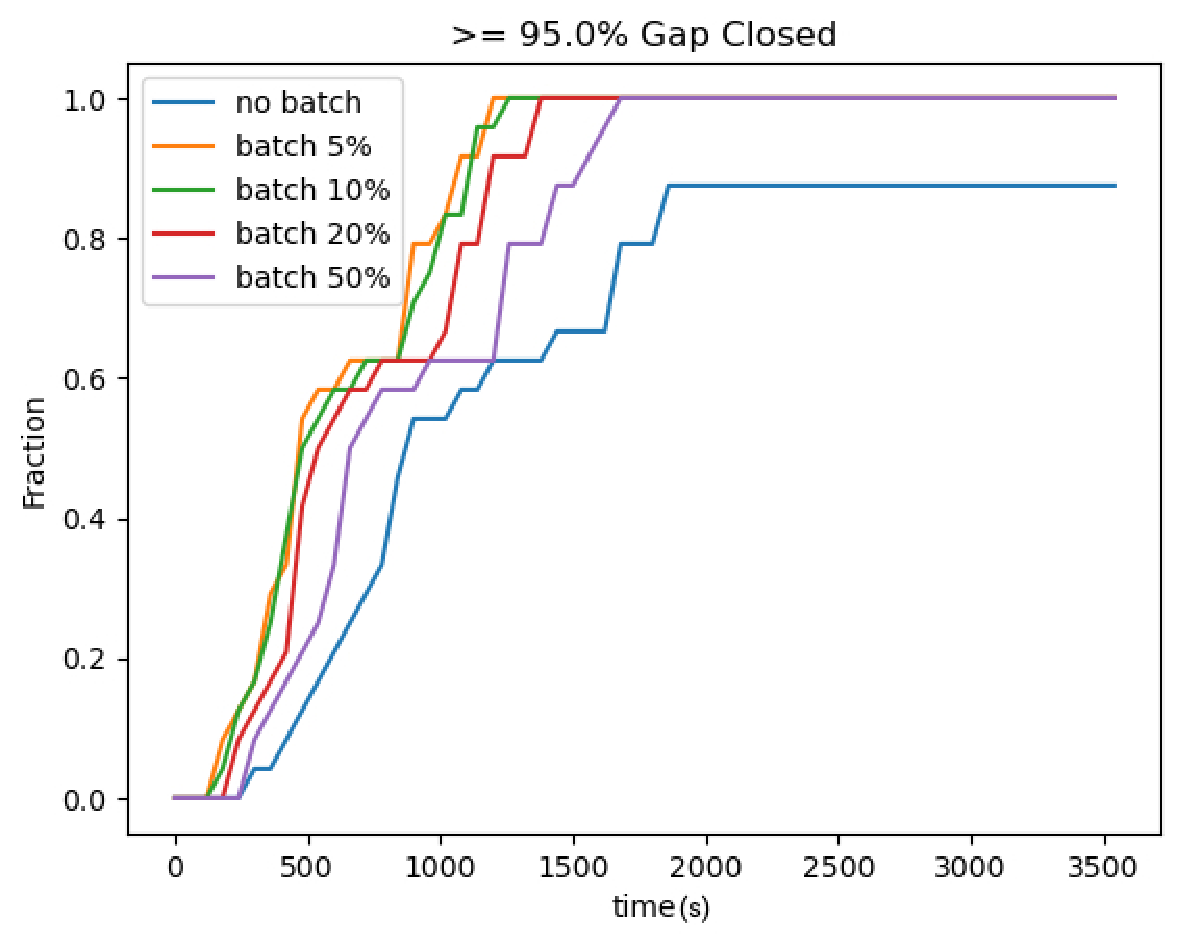}}
	 
		\centerline{(b)}
	\end{minipage}
 
	\caption{$\gamma$-gap-closed profile for sslpv instances obtained by restrictive separation with $\gamma = 75\%$ (left) and $\gamma = 95\%$ (right)}
	\label{res_sslpv_gap}
\end{figure}

Similarly, in the case of restricted separation, we also present the $\gamma$-gap-closed profile of the sslp and sslpv instances in \Cref{res_sslp_gap} and \Cref{res_sslpv_gap}, respectively. We can almost draw the same conclusion as that obtained by exact separation. 

\subsubsection{Quality test of the averaged Lagrangian cut}
\label{average}
In this subsection, we are going to test the quality of the averaged Lagrangian cut proposed in \Cref{sec:average}. When generating Lagrangian cut by batch, at each iteration, Lagrangian cuts of part of these scenarios are generated and  the averaged Lagrangian cut is added for the other scenarios. By this operation, we can save much time spent by solving these Lagrangian subproblems for these scenarios. However, in our numerical experiment, we found that this modification did not perform better than just implementing \Cref{a1}. This may be because the scale of a single scenario of these chosen instances is not too large, and spending time to solve more separation problems to get a stronger Lagrangian cut is worthwhile. Therefore in this subsection, we only present the violation of the averaged Lagrangian cut to show its strength. And it will be an interesting direction to explore the trade-off between the time consumed to solve more separation problems and the strength of the obtained cut for larger scale problems. Given a Lagrangian cut $(\pi_s, 1)$ and the current relaxed optimal solution $(\hat{x}, \hat{\theta}_s)$ for any scenario $s \in S$, we say the violation of the cut at $(\hat{x}, \hat{\theta}_s)$ as $\Bar{Q}_s(\pi_s, 1) - \pi_s \hat{x} - \hat{\theta}_s$.\ 

We collected data using the following procedure. We implemented \Cref{a1} with different batch sizes. After each 'while' loop, we computed two violations for each scenario not addressed in that loop. One violation is associated with the averaged Lagrangian cut, while the other corresponds to the Lagrangian cut obtained by solving \cref{equa9}. For each instance, we generated ten full rounds of Lagrangian cuts to collect data. \Cref{percent larger than zero} presents the average percentage of scenarios whose violation of the averaged Lagrangian cut is strictly larger than zero. \Cref{percent violation} displays the average ratio between the two types of violations.

\begin{table}[ht]
\label{percent larger than zero}
\caption{Average percent of scenarios with positive violation}
\centering
\begin{tabular}{llll}
\hline & \multicolumn{3}{c}{ batch size($\beta$) } \\
 Problem class & $5\%$ & $20\%$ & $50\%$\\
\hline 
 sslp & $70\%$& $77\%$  & $83\%$ \\
 sslpv & $64\%$ &  $68\%$&  $63\%$ \\
 smcf & $62\%$ & $66\%$ &  $71\%$ \\
\hline
\end{tabular}
\end{table}

\begin{table}[ht]
\label{percent violation}
\caption{Average ratio between the two violations}
\centering
\begin{tabular}{llll}
 \hline & \multicolumn{3}{c}{ batch size($\beta$) } \\
 Problem class & $5\%$ & $20\%$ & $50\%$\\
\hline
 sslp &$75\%$ &  $80\%$ & $84\%$ \\
 sslpv & $69\%$ &  $70\%$& $75\%$  \\
 smcf & $68\%$ & $77\%$ &  $74\%$ \\
\hline
\end{tabular}
\end{table}

The two tables above indicate that the quality of the averaged Lagrangian cut is commendable. As shown in \Cref{percent larger than zero}, the relaxed optimal solution is cut off by the averaged Lagrangian cut in the majority of scenarios. \Cref{percent violation} demonstrates that the violation of the averaged Lagrangian cut can achieve a high percentage compared to the violation of the Lagrangian cut. Moreover, the performance of the averaged Lagrangian cut generally improves with an increasing batch size, consistent with the observation in proposition 4.5.

\subsection{Results when incorporated into  Branch-and-cut framework}
\label{optimality}
As highlighted in \cite{rahmaniani2020benders, chen2022generating}, the incorporation of Lagrangian cuts extends the processing time at the root node to diminish the size of the branch-and-cut tree. Consequently, in alignment with this perspective, this subsection delves into assessing the performance of achieving optimality through the branch-and-cut method after addressing the root node with \Cref{a1}. Because the above subsection has shown the advantage of the small batches, in this subsection we only consider generating Lagrangian cut by batch $5\%$. We present the result here for smcf with exact separation and sslp and sslpv with restricted separation. Our baseline is the result obtained without batch processing ($\Cref{a0}$). When implementing the algorithm without batch processing, we set a time limit of two hours. The generation of Lagrangian cuts is halted when the gap closed by the last five iterations does not exceed $1\%$ of the total gap closed thus far. Subsequently, the program is integrated into the branch-and-cut framework. For our batch algorithm, the generation of Lagrangian cuts is terminated either when the lower bound matches that achieved by $\Cref{a0}$ or when no further Lagrangian cuts can be generated. This allows for a comparison of the branch-and-cut performance under the same lower bound at the root node. Because  the authors of \cite{rahmaniani2020benders,chen2022generating} have proven the advantage of Lagrangian cut over general branch-and-Bender-cut algorithm, we do not present results for the general branch-and-Bender-cut algorithm here, but only compare the results resulting from generating Lagrangian cut by batch and by no batch.\

\begin{table}
\label{table sslp}
\caption{Comparison of algorithms for solving sslp instances to optimality}
\resizebox{\textwidth}{!}{%
\begin{tabular}{lllll}
\hline & \multicolumn{2}{c}{ On $|S|=50$ instances } & \multicolumn{2}{c}{ On $|S|=200$ instances } \\
 & RstrMIP-Tra & RstrMIP-Lbb($5\%$)  & RstrMIP-Tra & RstrMIP-Lbb($5\%$)\\
\hline \# solved & $12 / 12$  & $12 / 12$ & $12 / 12$ & $12 / 12$  \\
 Avg soln time & 1231 &  832 &3120 & 2232  \\
 Avg gap (\%) & 0.0 & 0.0 & 0.0 & 0.0  \\
 Avg B\&C time & 18 & 23 & 118 & 125  \\
Avg \# nodes & 720 & 758 & 1327 &  1356 \\
\hline
\end{tabular}%
}
\end{table}

\begin{table}
\label{table sslpv}
\caption{Comparison of algorithms for solving sslpv instances to optimality}
\resizebox{\textwidth}{!}{%
\begin{tabular}{lllll}
\hline & \multicolumn{2}{c}{ On $|S|=50$ instances } & \multicolumn{2}{c}{ On $|S|=200$ instances } \\
 & RstrMIP-Tra & RstrMIP-Lbb($5\%$)  & RstrMIP-Tra & RstrMIP-Lbb($5\%$)\\
\hline \# solved & $12 / 12$  & $12 / 12$ & $8 / 12$ & $9 / 12$  \\
 Avg soln time & 2621 &  2015 &4603& 3598  \\
 Avg gap (\%) & 0.0 & 0.0 & 0.35 & 0.27  \\
 Avg B\&C time & 631 & 643 & 1249 & 1224  \\
Avg \# nodes & 7891 & 7810 & 8320 &  8213 \\
\hline
\end{tabular}%
}
\end{table}

\begin{table}[ht]
\label{table smcf}
\centering
\caption{Comparison of algorithms for solving smcf(r04) instances to optimality}
\begin{tabular}{lll}
\hline & \multicolumn{2}{c}{ On $|S|=500$ instances } \\
 & Exact-Tra & Exact-Lbb($5\%$)  \\
\hline \# solved & $6/ 6$  & $6 / 6$ \\
 Avg soln time & 4663 &   3275  \\
 Avg gap (\%) & 0.0 & 0.0  \\
 Avg B\&C time & 348 & 233  \\
Avg \# nodes & 672 & 566 \\
\hline
\end{tabular}
\end{table}
As expected, because the gap at the root node has been greatly reduced by Lagrangian cut, every instance in sslp and smcf can be solved to optimality within a reasonable time consumed for the process of branch-and-cut for all these instances no matter which algorithm we use, by batch or by no batch. While for instances of sslpv, even with the inclusion of Lagrangian cut, the branch-and-cut process is still time-consuming. But our algorithm still perform better than the baseline. We show the computational results in the three tables (\cref{table sslp} - \cref{table smcf}). The three tables contains the information about solving these instances to optimality, respectively for sslp, sslpv and smcf. The information includes the number of instances solved ($\#$ solved), average time when solving to optimality (Avg soln time), average gap between the incumbent solution and the best lower bound (Avg B$\&$C time) and the number of nodes explored during the branch-and-cut process (Avg $\#$ nodes). The tables indicate that generating Lagrangian cuts by batch can effectively diminish the scale of the branch-and-cut tree, particularly when there is a substantial improvement in the lower bound. In essence, the time allocated for exploring the branch-and-cut tree is predominantly influenced by the lower bound at the root node. The predominant time savings occur during the enhancement of the lower bound at the root node. This outcome reaffirms the advantages offered by our proposed algorithm.

\section{Conclusion and points of future study}
\label{conclusion}
We propose to generate Lagrangian cut by batch-a new style for generating Lagrangian cut. We have concluded theoretic analysis for this algorithm, including convergence properties in different situations and lower bound improvement property. Specifically, we provide theoretical proof establishing the advantages of our proposed algorithm. Through extensive experiments on three classes of two-stage Stochastic Integer Programming problems, we demonstrate that our algorithm accelerates lower bound improvement significantly while requiring fewer Lagrangian cuts in both separation paradigms. Consequently, it significantly reduces the time required to achieve optimality when solving instances.\

Moreover, the numerical results affirm the effectiveness of our proposed averaged Lagrangian cut. As a future direction, exploring the utility of the averaged Lagrangian cut in larger-scale instances, where solving a separation problem can be considerably more time-consuming, presents an interesting avenue for investigation. This approach can be conceptualized as a basic learning process. Therefore, we are exploring the prospect of learning to generate Lagrangian cuts in future research. Additionally, investigating the application of batch algorithms in general cutting plane methods poses an intriguing and valuable direction for further exploration.

\appendix
\section{A description about the restricted separation algorithm for generating Lagrangian cut}
\label{appendix}
Diverging from the exact separation method, where $\Pi_s$ is selected as a neighborhood around the original point in Euclidean space, they constrain the feasible region to a simpler space, such as the spanned space by a specific class of Bender cuts. We will show the restricted separation algorithm of generating Lagrangian cut here in Algorithm A.1. 
\begin{algorithm}
\caption{Restricted separation of Lagrangian cut \cite{chen2022generating}}
\begin{algorithmic}[1]\label{a0}
\STATE Initialize $k \leftarrow 0$, $cutfound \leftarrow True$
\WHILE{$cutfound = True$}
\STATE $cutfound \leftarrow False$;
\STATE Solve $\min_{x, \theta_s}\left\{c^{\top}x + \sum_{s \in S}p_s\theta_s :\theta_s \geq \hat{Q}^k_s, Ax = b\right\}$  to obtain current relaxed optimal solution $(x^k, \left\{\theta^k_s\right\}_{s \in S})$
\FOR{$s \in S$}
\STATE Solve Bender subproblem \cref{sub} and obtain dual solution $\lambda^k_s$;
\IF{the generated Bender cut is violated}
\STATE update $\hat{Q}^k_s(x)$ to obtain $\hat{Q}^{k+1}_s(x)$.
\STATE $cutfound \leftarrow True$
\ENDIF
\ENDFOR

\IF{$cutfound = False$}
\FOR{$s \in S$}
\STATE Generate $\Pi^k_s \subset \mathbb{R}^n$
\STATE Solve the separation problem \cref{equa9} with $(\hat{x}, \hat{\theta}_s) = (x^k, \theta^k_s)$ and $\Pi_s = \Pi^k_s$
\IF{the obtained Lagrangian cut is violated}
\STATE update $\hat{Q}^k_s(x)$ to obtain $\hat{Q}^{k+1}_s(x)$.
\STATE $cutfound \leftarrow True$
\ENDIF

\ENDFOR
\ENDIF
\STATE $k \leftarrow k+1$

\ENDWHILE
\end{algorithmic}
\end{algorithm}

Note that the crux of this algorithm resides in solving the separation problem defined by \cref{equa9}. This problem is a bi-level mixed-integer program and is hard to solve. The conventional strategy for addressing this issue involves employing the cutting plane method, wherein $\Bar{Q}_s(\pi, \pi_0)$ is substituted with its relaxation and progressively refined through successive tightening steps. That is $\Bar{Q}_s$ is approximated by a cutting plane model:  
$$
Q_s(\pi, \pi_0) = \min_{x, \theta_s}\left\{\pi^{\top} x +  \pi_0 \theta_s, (x, \theta_s) \in \hat{E}^s\right\}
$$
where $\hat{E}^s$ is a finite subset of $E^s$.
The specific operation can be explained in Algorithm A.2.\

Be aware that the parameter $\delta$ gauges the precision with which the separation problem defined by  \cref{equa9} is resolved, and its significance is evident in influencing the effectiveness of the Lagrangian cut, as demonstrated by the experimental findings in \cite{chen2022generating}. If separating Lagrangian cut exactly, too much time would be consumed. We will elaborate the choice of the parameter when implementing our experiments in \cref{sec:experiment}.

\begin{algorithm}
\label{cutting plane}
\caption{Solution of separation problem \cref{equa9}\cite{chen2022generating}}
\begin{algorithmic}[1]
\STATE Input: $(\hat{x}, \hat{\theta}_s)$, $\Pi_s$, $\hat{E}^s, \delta > 0$
\STATE Output: $(\pi^*, \pi_0^*)$, $\hat{E}^s$
\STATE Initialize $UB \leftarrow + \infty$, $LB \leftarrow - \infty$
\WHILE{$UB > 0$ and $UB - LB \geq \delta UB$}
\STATE $UB \leftarrow \max_{\pi, \pi_0}\left\{Q_s(\pi, \pi_0) - \pi^{\top} \hat{x} - \pi_0 \hat{\theta}_s: (\pi, \pi_0) \in \Pi_s \right\}$, and collect solution $(\pi, \pi_0) = (\hat{\pi}, \hat{\pi_0})$
\STATE Solve \cref{lagsub} to evaluate $\Bar{Q}_s(\hat{\pi}, \hat{\pi}_0)$ and update $\hat{E}^s$ and $Q_s$ with optimal and suboptimal solutions obtained when solving \cref{lagsub}
\IF{$LB < \Bar{Q}_s(\hat{\pi}, \hat{\pi_0}) - \hat{\pi}^{\top} \hat{x} - \hat{\pi}_0 \hat{\theta_s}$}
\STATE $LB \leftarrow \Bar{Q}_s(\hat{\pi}, \hat{\pi_0}) - \hat{\pi}^{\top} \hat{x} - \hat{\pi}_0 \hat{\theta_s}$
\STATE $(\pi^*, \pi_0^*) \leftarrow (\hat{\pi}, \hat{\pi_0})$
\ENDIF
\ENDWHILE
\end{algorithmic}

\end{algorithm}

\section*{Acknowledgments}
This work was funded by the National Nature Science Foundation of China under Grant No. 12320101001 and 12071428.

\bibliographystyle{siamplain}
\bibliography{references}

\end{document}